\newtheorem{Theorem}{Theorem}
\newtheorem{Lemma}{Lemma}
\newtheorem{Proposition}{Proposition}
\newtheorem{Corollary}{Corollary}
\newtheorem{Conjecture}{Conjecture}
\title{Eisenstein Quasimodes and QUE}
\author{Shimon Brooks}
\begin{document}
\maketitle

{\em Abstract:}  We consider the question of Quantum Unique Ergodicity for quasimodes on surfaces of constant negative curvature, and conjecture the order of quasimodes that should satisfy QUE.  We then show that this conjecture holds for Eisenstein series on $SL(2,\mathbb{Z})\backslash\mathbb{H}$, extending results of Luo-Sarnak and Jakobson.  Moreover, we observe that the equidistribution results of Luo-Sarnak and Jakobson extend to quasimodes of much weaker order--- for which QUE is known to fail on compact surfaces--- though in this scenario the total mass of the limit measures will decrease.  We interpret this stronger equidistribution property in the context of arithmetic QUE, in light of recent joint work with E. Lindenstrauss \cite{jointQmodes} on joint quasimodes.

\section{Introduction}\label{Intro}

Let  $M=\Gamma\backslash \mathbb{H}$ be a surface of constant negative curvature, and consider the geodesic flow on the unit cotangent bundle $S^*M$.  It is well known that this dynamical system is ``chaotic"--- eg., mixing (with respect to Liouville measure on $S^*M$), Anosov, etc.  On the other hand, the high-energy spectral data for such surfaces --- which is the semiclassical quantum analog of the geodesic flow--- is extremely mysterious (see eg. \cite{SarnakHyp}).  The Quantum Unique Ergodicity (QUE) Conjecture \cite{RS} states that high energy eigenfunctions should become equidistributed as the eigenvalue tends to $\infty$ , corresponding to the semiclassical limit $\hbar\to 0$; precisely, that the microlocal lifts
$$\mu_\phi : f\in C^\infty(S^*M) \mapsto \langle Op(f)\phi, \phi\rangle$$
converge in the weak-* topology to the Liouville measure on $S^*M$ as the eigenvalue of $\phi$ tends to $\infty$.  Any weak-* limit point of the $\mu_\phi$ is called a {\bf quantum limit}.  

It is known \cite{Shn, Z2, CdV} that any quantum limit is a (positive) measure invariant under the geodesic flow; if we normalize $||\phi||_2=1$, then they are probability measures.  It is further shown by \v{S}nirel'man-Zelditch-Colin de Verdi\'ere that {\em almost all} $\mu_\phi$ become equidistributed, in the following sense:  for any choice of orthonormal basis of $L^2(M)$ consisting of Laplace eigenfunctions, there exists a zero-density exceptional set of basis eigenfunctions such  that
the remaining eigenfunctions satisfy QUE.  This property is known as {\bf Quantum Ergodicity}, and holds in great generality--- it depends only on the ergodicity of the flow.  QUE then asks that the exceptional set be empty, in the case of manifolds of negative sectional curvature.

For a number of reasons, it is suspected \cite{SarnakProgress} that spectral multiplicities (or lack thereof) are intimately tied with the QUE Conjecture, and it is this aspect that we wish to explore.
  We parametrize the spectrum of the Laplacian on $M$ by $\lambda = \frac{1}{4}+r^2$, and here $r\to\infty$ plays the role of the inverse Planck's constant in the semiclassical limit.  It is conjectured \cite{IwaniecSarnak} that the multiplicities are bounded by $O_\epsilon(\lambda^\epsilon)$, and in fact it would not be too surprising if they are uniformly bounded.  This problem is well out of the reach of current technology, but in any case it is not possible to study varying multiplicity scales and their effect on the quantum limits.  

However, one can artificially introduce degeneracies by considering quasimodes, or approximate eigenfunctions.  Define an $\omega(r)$-{\bf quasimode with approximate parameter} $r$ to be a function $\psi$ satisfying 
\begin{equation}\label{regular quasimode condition}
||(\Delta + (\frac{1}{4}+r^2))\psi||_2 \leq r \omega(r)||\psi||_2
\end{equation}
The  factor of $r$ in our definition comes from the fact that $r$ is essentially the square-root of the Laplace eigenvalue\footnote{Our definition here is different from the usual definition of quasimodes by a factor of $r$, since it is more convenient for us to consider the size of the spectral window $\omega(r)$ instead of the $L^2$ defect from being an eigenfunction.}.  For our purposes, one may think of an $\omega(r)$-quasimode like a linear combination of the eigenfunctions with spectral parameter in the window $[r-\omega(r), r+\omega(r)]$.  

The main term in Weyl's Law says that the asymptotic density of eigenfunctions near spectral parameter $r$ is proportional to $r$ (with a constant depending on the area of the surface).  Though controlling the error term is a very difficult problem--- indeed, this is precisely the problem of bounding multiplicities--- it is known that this approximation is valid for ``large logarithmic windows" $\omega(r)\geq C/\log{r}$, where again $C$ depends on the surface, and is believed to be valid for far smaller windows; eg., $\omega(r) = r^{-\delta}$.  In any case, we shall be interested in this paper primarily with windows of size $\omega(r) \gtrsim 1/\log{r}$, where we certainly expect the error in Weyl's Law to be small.  We conjecture that this is the appropriate threshold to study for QUE:
\begin{Conjecture}\label{hypconj}
Let $M$ be a compact hyperbolic surface.  Then any sequence of $o(1/\log{r})$-quasimodes satisfies QUE.
\end{Conjecture}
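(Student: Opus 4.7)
The plan is to follow the standard microlocal strategy for QUE of eigenfunctions, tracking the quasimode defect at each step, and to exploit the fact that $\omega(r)=o(1/\log r)$ is precisely the threshold at which the entropy method of Anantharaman and Anantharaman--Nonnenmacher should still apply to a quasimode $\psi$.

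First I would attach microlocal lifts $\mu_{\psi_j}$ to a sequence of $L^2$-normalized $\omega(r_j)$-quasimodes $\psi_j$ with $r_j\to\infty$, exactly as in the eigenfunction case, and pass to a weak-$*$ subsequential limit $\mu$, a positive measure on $S^*M$ of total mass $1$. To verify that $\mu$ is invariant under the geodesic flow $g_t$, set $P=\sqrt{-\Delta-1/4}$ (on the orthogonal complement of the constants) and apply Egorov's theorem:
\[ \langle Op(f\circ g_t)\psi_j,\psi_j\rangle \;=\; \langle e^{-itP}Op(f)e^{itP}\psi_j,\psi_j\rangle \;+\; O_f(|t|/r_j). \]
The quasimode condition (\ref{regular quasimode condition}) gives $\|(P-r_j)\psi_j\|_2 \lesssim \omega(r_j)$, whence $\|e^{itP}\psi_j-e^{itr_j}\psi_j\|_2 = O(|t|\omega(r_j))$, and the right-hand side above equals $\langle Op(f)\psi_j,\psi_j\rangle + O_f(|t|\omega(r_j))$; letting $j\to\infty$ with $t$ fixed yields $g_t$-invariance of $\mu$.

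The substantive step is to adapt the entropy argument of Anantharaman--Koch--Nonnenmacher to $\psi_j$. That argument propagates the quasimode by the wave group up to the Ehrenfest time $T_E(r_j)\asymp \log r_j$ and controls matrix elements along exponentially refined partitions of $S^*M$. Over such a time the propagation defect is only $T_E(r_j)\omega(r_j)=o(1)$, precisely thanks to the hypothesis $\omega(r)=o(1/\log r)$, so the entropy lower bound should go through essentially verbatim. This would yield that $\mu$-almost every ergodic component has positive (indeed at least $h_{\mathrm{top}}/2$) Kolmogorov--Sinai entropy.

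The main obstacle, and the reason this is stated only as a conjecture, is the final step: even for genuine eigenfunctions, deducing from such an entropy lower bound that the only $g_t$-invariant probability measure on $S^*M$ realizing it must be Liouville measure is precisely the content of QUE, which is open on general compact hyperbolic surfaces. Conjecture \ref{hypconj} is thus in this sense no easier than QUE for eigenfunctions; its content is the assertion that no genuinely new obstruction arises on broadening from eigenfunctions to quasimodes in a sub-Ehrenfest window, so that $1/\log r$ is the correct threshold. A complete proof therefore requires either new measure-rigidity input (as in the arithmetic setting pursued in the rest of the paper), or a direct attack on eigenfunction QUE itself.
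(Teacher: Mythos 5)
You were asked to prove Conjecture~\ref{hypconj}, which the paper explicitly states as a conjecture and does not prove; there is no proof in the paper to compare against. You correctly recognize this, and your final paragraph accurately locates the unresolved core: even with an entropy lower bound in hand, deducing that the semiclassical limit is Liouville measure is precisely the open content of QUE (an entropy bound $h_{KS}(\mu)\geq\frac{1}{2}h_{\mathrm{top}}$ is consistent with measures like $\frac{1}{2}\delta_\gamma+\frac{1}{2}\mathrm{Liouville}$), so no ``proof'' is to be expected here.

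Where your write-up diverges from the paper is in the heuristic for why $o(1/\log r)$ is the right threshold. You argue via Egorov-type propagation: the accumulated defect $T_E(r_j)\,\omega(r_j)$ stays $o(1)$ up to Ehrenfest time, so the Anantharaman--Nonnenmacher entropy machinery ought to survive. The paper instead motivates the threshold by (a) the sharpness result of \cite{locquasi}, which constructs $\left(\frac{\epsilon}{\log r}\right)$-quasimodes scarring on a prescribed closed geodesic for every $\epsilon>0$, showing the window cannot be relaxed to $O(1/\log r)$; and (b) the analogy with B\'erard's $O(r/\log r)$ multiplicity bound and the range in which the error in Weyl's law is controlled, suggesting QUE is ``on par with'' a multiplicity bound of $o(r/\log r)$. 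Both heuristics are reasonable and compatible; the paper's gives an unconditional sharpness statement, while yours gives a conditional mechanism (entropy) that would need substantial new input to close. One small caution: your phrasing ``$\mu$-almost every ergodic component has entropy $\geq h_{\mathrm{top}}/2$'' overstates what Anantharaman--Nonnenmacher prove even for true eigenfunctions; their bound is on the entropy of $\mu$ itself, and the ergodic-component version is known to fail in general (e.g.\ in the Hecke setting one only has positive entropy of a.e.\ component, which is a different and weaker conclusion).
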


Within the context of hyperbolic surfaces, this conjecture is a slight strengthening of the QUE Conjecture \cite{RS},  essentially stating (in a quantitative way) that the small spectral multiplicities are responsible for QUE.  It is also important to observe that the window $\omega(r) = o(1/\log{r})$ is just beyond what can be analyzed at present:  as remarked by Sarnak \cite{SarnakProgress}, any proof of QUE is expected to address the multiplicity issue (though perhaps indirectly).  Conjecture~\ref{hypconj} suggests that QUE is on par with a multiplicity bound of $o(r/\log{r})$, whereas the current best known bound due to Berard \cite{Berard} is $O(r/\log{r})$ (with an explicit implied constant depending on the manifold).

We remark that, if true, Conjecture~\ref{hypconj} is sharp.  We have shown in \cite{locquasi} that for any compact hyperbolic surface $M$ and closed geodesic $\gamma\subset S^*M$, and any $\epsilon>0$, there exist $\frac{\epsilon}{\log{r}}$-quasimodes on $M$ whose microlocal lifts concentrate  positive mass on $\gamma$.  Naturally, the lower bound on the mass of $\gamma$ decreases to $0$ as $\epsilon\to 0$, though no attempt was made in the construction to obtain the sharpest possible quantitative bound.

\vspace{.2in}

In the present paper, we discuss quasimodes for Eisenstein series on $M=SL(2,\mathbb{Z})\backslash\mathbb{H}$ (though this may safely be replaced with similar congruence subgroups of $SL(2,\mathbb{Z})$).  We recall that the Eisenstein series are given by
$$E(z,s) = \sum_{\gamma\in\Gamma_\infty\backslash\Gamma} Im(\gamma.z)^{s}$$
for $Re(s)>1$, and analytically continued to be meromorphic in the entire plane.  We will be interested in the half-line $E(z,\frac{1}{2}+ir)$ as $r\to\infty$.
These are eigenfunctions are the Laplacian, of eigenvalue $\frac{1}{4} + r^2$, but are not in $L^2(M)$.  However, we still have following analogue of QUE, proven by Luo-Sarnak \cite{LuoSarnak} and Jakobson \cite{Jak}:  for fixed smooth functions $f, g \in C_0^\infty(S^*M)$ we have
\begin{equation}\label{LS-Jak}	
\frac{\mu_r(f)}{\mu_r(g)} = \frac{\int_{S^*M} f(x) dx}{\int_{S^*M} g(x) dx}	
\end{equation}
where 
$$\mu_r : f \mapsto \left\langle Op(f) E\left(z, \frac{1}{2}+ir\right), E\left(z,\frac{1}{2}+ir\right)\right\rangle$$
is a microlocal lift of $| E(z, \frac{1}{2}+ir)|^2 dz$ to $S^*M$, and $dx$ is Liouville measure on $S^*M$.  The renormalization is important, as they actually show that\footnote{The original paper \cite{LuoSarnak} has some typos, leading to an incorrect constant; we thank Yiannis Petridis for pointing  out this correction, as it appears in the paper \cite{PetridisRR, PetridisRR_Err}.}
$$\mu_r(f) \sim \frac{6}{\pi}\log{r} \cdot \int_{S^*M} f(x) dx$$
grows proportionally to $\log{r}$.

We wish to extend this result to the context of quasimodes, where we replace $E(z,\frac{1}{2}+ir)$ with a linear combination of Eisenstein series of varying spectral parameters, and construct the corresponding microlocal lifts.   Namely, we take a sequence $\{h_j\}$ of probability densities, i.e. $\int_r h_j(r)dr =1$, which we will assume to be smooth functions on $\mathbb{R}^+$, localized near $r_j$
 in the sense that
\begin{equation}\label{quasimode condition}
\int_r h_j(r)|r-r_j|dr = \omega(r_j) \searrow 0
\end{equation}
is small, and construct the ``Eisenstein $\omega(r_j)$-quasimodes"
$$E_{h_j}(z): = \int_r h_j(r) E(z, \frac{1}{2}+ir) dr$$

We then consider the microlocal lifts
$$\mu_{h_j} : f \mapsto \langle Op(f) E_{h_j}, E_{h_j}\rangle$$
and the weak-* limit points $\mu$ of the sequence $\{\frac{1}{2\log{r}}\mu_{h_j}\}$ as $r_j\to\infty$, which we call quantum limits for $\{E_{h_j}\}$.  We have chosen the normalization such that true Eisenstein series give probability measures; by \cite{LuoSarnak} and \cite{Jak} we have
$$\mu_{r}(S^*M) = \frac{6}{\pi}  \log{r} \cdot Vol(S^*M) = 2\log{r}$$
since the volume of the modular surface is $\frac{\pi}{3}$.

Clearly, studying these quantum limits boils down to calculating off-diagonal matrix coefficients of the form 
$$\mu_{r',r''}(f) := \left\langle Op(f)E(z, \frac{1}{2}+ir'), E(z, \frac{1}{2}+ir'')\right\rangle$$
and then integrating back over $r'$ and $r''$.  For the most part we will think of $h_j$ as being non-negative, though this is not essential for the results.  In fact, it is useful to think of the $h_j$ as being scalings of a fixed non-negative, symmetric bump function $\chi$; i.e., set $h_j(r) = \chi\left( \frac{r-r_j}{K_j}\right)$ for a suitable choice of scaling, such as $K_j=C/\log{r_j}$.

Our main results in this context are:
\begin{Theorem}\label{equidist}
Let $\mu$ be a quantum limit for Eisenstein $o(1)$-quasimodes; i.e., a weak-* limit point of the distributions $\{\frac{1}{2\log{r}}\mu_{h_j}\}$ for a sequence $\{h_j\}$ satisfying $\int_r h_j(r)dr = 1$ and $\int_r h_j(r)|r-r_j|dr = o(1)$ as $r_j\to\infty$.  

Then for any  $f, g\in C_0^\infty(S^*M)$ we have
$$\mu(f) \int_{S^*M} g(x) dx = \mu(g)\int_{S^*M} f(x) dx   $$
\end{Theorem}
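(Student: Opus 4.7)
The plan is to expand
$$\mu_{h_j}(f) = \iint h_j(r') h_j(r'') \mu_{r', r''}(f) \, dr' dr''$$
and reduce the theorem to uniform control of the off-diagonal matrix coefficients $\mu_{r', r''}(f)$ over the support of $h_j\otimes h_j$, complementing the diagonal asymptotic of Luo-Sarnak and Jakobson. Standard arguments for $o(1)$-quasimodes show that any weak-* limit $\mu$ of $\{\frac{1}{2\log r}\mu_{h_j}\}$ is a positive, geodesic-flow invariant measure, so the theorem reduces to verifying $\mu(f_0) = 0$ for every $f_0 \in C_0^\infty(S^*M)$ with $\int_{S^*M} f_0 \, dx = 0$.

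Using the $SL(2,\mathbb{R})$-equivariant framework for microlocal lifts (following Zelditch and Lindenstrauss), it suffices to check this for $f_0$ ranging over a spectral basis of automorphic test functions on $\Gamma\backslash SL(2,\mathbb{R})$: weight-$k$ vectors in the representations generated by Hecke-Maass cusp forms $\phi$ and by unitary Eisenstein series. For each such $f_0$ attached to $\phi$, I would apply the Rankin-Selberg unfolding method to obtain an explicit formula for $\mu_{r', r''}(f_0)$ generalizing that of Luo-Sarnak. The off-diagonal coefficient will involve $L$-values $L(\tfrac{1}{2}, \phi)$, $L(\tfrac{1}{2} + i(r' \pm r''), \phi)$, zeta factors at $1 + 2ir'$ and $1 - 2ir''$, and archimedean gamma factors from the Whittaker expansion of $E(z,\tfrac{1}{2}+ir)$. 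Combining convexity (or known subconvex) bounds on these $L$-values with the de la Vall\'ee Poussin estimate $|\zeta(1+it)|^{-1} \ll \log|t|$ and Stirling analysis of the gamma factors, I would show $\mu_{r', r''}(f_0) = o(\log r_j)$ uniformly for $(r', r'')$ in the support of $h_j\otimes h_j$. Since $\iint h_j(r')h_j(r'')\,dr'dr'' = 1$, this bound integrated against $h_j\otimes h_j$ yields $\mu_{h_j}(f_0)/(2\log r_j) \to 0$ as desired. The Eisenstein spectral part of $f_0$ is handled by a parallel Rankin-Selberg computation.

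The main obstacle I expect is the uniform off-diagonal bound. The diagonal case $r' = r''$ is precisely the Luo-Sarnak-Jakobson calculation, but the genuinely off-diagonal regime introduces the new factor $L(\tfrac{1}{2} + i(r'-r''), \phi)$, which can vary non-trivially across the support of $h_j$ when $\omega(r_j)$ is merely $o(1)$ rather than $O(1/\log r_j)$; moreover, the archimedean factors hidden in $\Lambda(\tfrac{1}{2}+i(r'+r''), \phi)$ must be tracked carefully and balanced against those absorbed into the Plancherel normalization of the Eisenstein series. The quasimode hypothesis $\int h_j(r)|r - r_j|\,dr = o(1)$ should provide the leverage needed, by bounding the effective dispersion of $r'$ and $r''$ around $r_j$ in an $L^1$ sense. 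Once the uniform estimate is established, the remainder of the proof is essentially a repackaging of the Luo-Sarnak-Jakobson analysis, now integrated against the probability densities $h_j$.
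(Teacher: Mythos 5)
Your high-level plan --- spectrally decompose the test function, compute off-diagonal matrix coefficients $\mu_{r',r''}$ by Rankin--Selberg unfolding, estimate the resulting $L$-values and Gamma factors, and then integrate against $h_j \otimes \overline{h_j}$ --- is exactly the strategy of the paper, and the reduction to $\mu(f_0)=0$ for zero-mean $f_0$ is a valid reformulation of the conclusion. But the obstacle you flag is not the real one, and the real one is absent from your sketch. The factor $L(\tfrac{1}{2}+i(r'-r''),\phi)$ is harmless: since $r'-r''=o(1)$, its argument stays in a bounded neighbourhood of $\tfrac12$, so for the fixed cusp form $\phi$ it is $O_\phi(1)$; no new leverage from the quasimode condition is needed. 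The $L$-value that requires estimation at a new height is $L(\tfrac{1}{2}+i(r'+r''),\phi)$, but $r'+r''\approx 2r_j$, so the Good/Meurman subconvex bounds apply exactly as in the diagonal Luo--Sarnak/Jakobson case. What actually makes the off-diagonal computation go through painlessly is an elementary observation you do not state explicitly: for $r',r''=r_j+o(1)$, Stirling gives $|\Gamma(\sigma+ir')/\Gamma(\sigma+ir'')|=1+o(1)$, so all magnitude estimates from the diagonal case carry over verbatim (this is Lemma~\ref{Gamma}).

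The genuine difficulty you skip is the non-spherical incomplete Eisenstein contribution. For a weight-$2k$, $k\ne 0$, incomplete Eisenstein series $F_\psi$ (which has vanishing $S^*M$-average), the residue of the relevant $s$-integral at $s=1\pm i\Delta r$ has the form $\zeta(1+2i\Delta r)B_k(1+i\Delta r)+\zeta(1-2i\Delta r)B_k(1-i\Delta r)$. As $\Delta r\to 0$ the two simple $\zeta$-poles merge and the residue sum is of order $B_k'(1)+2\gamma B_k(1)$; since $B_k'$ picks up logarithmic derivatives of $\zeta(s\pm i\Sigma r)$ and of Gamma factors at height $\approx 2r_j$, this is generically $O(\log r_j)\cdot B_k(1)$. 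A crude Stirling estimate on each term in the transformed $_3F_2$-representations of the Whittaker integrals $I_k^3, I_k^4$ only gives $B_k(1)=O(1)$, which would leave $\mu_{r',r''}(F_\psi)=O(\log r_j)$ --- not the $o(\log r_j)$ you need. The crucial point in the paper is a nontrivial \emph{cancellation} among the four hypergeometric/Gamma terms, which shows $B_k(1\pm i\Delta r)\sim \tfrac{i\Delta r}{2k}B_0(1\pm i\Delta r)=o(1)$, i.e.\ $B_k$ actually vanishes at $s=1$. That cancellation (the heart of Jakobson's weight-$k\ne 0$ analysis, here pushed off the diagonal) does not appear in your proposal, and without it the argument fails. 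Finally, you pass silently from pointwise bounds on $\mu_{r',r''}(f_0)$ for $r',r''$ near $r_j$ to the integrated statement for $\mu_{h_j}$; the $L^1$ condition $\int h_j|r-r_j|\,dr=o(1)$ does not confine $h_j$ to a shrinking window, so one must replace $h_j$ by a compactly supported truncation $\tilde h_j$ with $\|h_j-\tilde h_j\|_1=o(1)$ and control the tails via the a priori Cauchy--Schwarz bound $|\mu_{r',r''}(f)|\lesssim_f \max(\log r',\log r'')$.
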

This is the appropriate statement of QUE for our context; we have adjusted the statement slightly from (\ref{LS-Jak}) to allow for $\mu$ to be the zero measure, which can happen in the context of quasimodes.

For Eisenstein $O(1/\log{r})$-quasimodes, we can calculate the mass of $\mu$ and say more:
\begin{Theorem}[Maass-Selberg relations] \label{sharp asymptotic}
Let $r_j\to\infty$, and suppose we have a function $\omega(r_j)=o(\log\log{r_j}/\log{r_j})$.  Then for any $r', r''$ satisfying
$$|r'-r_j|, |r''-r_j| <\omega(r_j)$$ 
we have
$$\frac{\mu_{r', r''}(f)}{2\log{r_j}} = \frac{e^{2i\log{r_j}\cdot (r'-r'')}-1}{2i\log{r_j}\cdot (r'-r'')} \cdot \frac{3}{\pi}\int_{S^*M}f(x) dx + o(1)$$

Thus if $\{h_j\}$ satisfies $\int_r h_j(r)dr=1$ and $\int_r h_j(r)|r-r_j|dr =o(\log\log{r_j}/\log{r_j})$, then we have
$$\frac{\mu_{h_j}(f)}{2\log{r_j}} = \frac{3}{\pi} \int_{S^*M}f(x)dx 
\cdot \iint h_j(r')\overline{h_j}(r'') 
\frac{e^{2i\log{r_j}\cdot (r'-r'')}-1}{2i\log{r_j}\cdot (r'-r'')}
dr'dr'' + o(1)$$
and if in fact $\int_rh(r)|r-r_j|dr =o(1/\log{r_j})$, then we get
\begin{eqnarray*}
\frac{\mu_{h_j}(f)}{2\log{r_j}} & \sim & \frac{3}{\pi}  \int_{S^*M}f(x) dx
\end{eqnarray*}
\end{Theorem}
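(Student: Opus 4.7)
The plan is to extend the unfolding / Rankin--Selberg--Zagier computations of Luo--Sarnak and Jakobson to the off-diagonal matrix coefficient $\mu_{r',r''}(f)$, tracking the dependence on $r'$ and $r''$ uniformly in the window of width $\omega(r_j)$ around $r_j$. Once the pointwise asymptotic in the first display is in hand, the second display is a routine integration against $h_j(r')\overline{h_j(r'')}\,dr'\,dr''$, and the third display follows from the bound $|e^{ix}-1|\le|x|$ applied to the sinc factor under the tighter hypothesis $\int h_j(r)|r-r_j|dr=o(1/\log r_j)$.

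By Selberg's spectral decomposition, combined with raising/lowering operators that promote scalar Laplace eigenfunctions on $M$ to sections on $S^*M=\Gamma\backslash PSL_2(\mathbb{R})$ (as in Zelditch's definition of the microlocal lift of $E(\cdot,\tfrac12+ir)$ already used in the excerpt), it suffices to verify the first display for $f$ in a dense test class: Hecke--Maass cusp forms (and their $K$-translates) together with incomplete Eisenstein series $E(\cdot|h)$ built from $h\in C_0^\infty(0,\infty)$. For $f$ a cusp form I would extend the Luo--Sarnak triple--product identity off-diagonally, writing $\mu_{r',r''}(f)$ as a finite combination of central values of Rankin--Selberg $L$-functions attached to $f$; subconvex bounds then give $\mu_{r',r''}(f)=O(r_j^{\varepsilon})$ uniformly for $r',r''$ in the window, and this is $o(\log r_j)$.

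The substance is the incomplete Eisenstein case. Writing $E(\cdot|h)=\tfrac{1}{2\pi i}\int_{(\sigma)} H(s)E(\cdot,s)\,ds$ (Mellin representation), one reduces to analyzing
$$I(s):=\int_{\Gamma\backslash\mathbb{H}}E(z,s)\,E(z,\tfrac12+ir')\,\overline{E(z,\tfrac12+ir'')}\,\frac{dx\,dy}{y^{2}}.$$
By Zagier's unfolding method, $I(s)$ factors (up to completed $\Gamma$- and $\zeta$-factors) as a ratio of Riemann $\zeta$'s with poles at $s=1,\ s=1\pm i(r'-r''),\ s=1\pm i(r'+r'')$. Shifting the contour past these poles, the residues at $s=1\pm i(r'+r'')$ are $O(1/r_j)$ and negligible, while the residues at $s=1$ and $s=1\pm i(r'-r'')$ coalesce into a triple pole for $r'=r''$ (recovering the Luo--Sarnak factor $2\log r_j$) and, for $r'\ne r''$ nearby, combine into the finite--difference quotient $(r_j^{2i(r'-r'')}-1)/(i(r'-r''))$, which after division by $2\log r_j$ is precisely the sinc factor in the theorem.

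The hard part will be the $o(1)$ uniform error. The most delicate ingredient is control of the scattering phase: one needs
$$\varphi(\tfrac12+ir')\,\overline{\varphi(\tfrac12+ir'')}=e^{-2i(r'-r'')\log r_j}\bigl(1+o(1)\bigr)$$
uniformly for $r',r''$ in the window. Stirling applied to $\Gamma(ir)/\Gamma(\tfrac12+ir)$ yields the leading phase $\theta(r)=-2r\log r+O(r)$, but the factor $\zeta(2ir)/\zeta(1+2ir)$ inside $\varphi$ contributes an $O((\log r)^{2/3})$ oscillation to $\theta'(r)$ via Vinogradov--Korobov bounds on $\zeta$ near the $1$-line. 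Linearizing $\theta(r')-\theta(r'')\approx -2(r'-r'')\log r_j$ therefore costs an error of size $(r'-r'')(\log r_j)^{2/3}$, which is $o(1)$ precisely because $\omega(r_j)=o(\log\log r_j/\log r_j)$; this absorption of the Vinogradov--Korobov fluctuation against the window size is the source of the explicit threshold in the hypothesis.
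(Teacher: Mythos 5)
Your overall strategy---unfold against a dense test class (cusp forms and incomplete Eisenstein series), Mellin-represent the incomplete Eisenstein series, shift the contour and compute residues of a ratio of zetas, and then isolate the phase of the scattering/Gamma factors as the single delicate ingredient---is exactly the route the paper takes. However, there are three concrete points where your proposal diverges from or misreads the actual mechanism, and the second one is substantive enough to affect the statement.

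First, the pole structure you describe is wrong. After Zagier-unfolding and invoking Ramanujan's identity, the Dirichlet series $\sum_n \sigma_{2ir'}(n)\sigma_{-2ir''}(n)n^{-s}$ produces the factor $\zeta(s+i\Delta r)\zeta(s-i\Delta r)\zeta(s+i\Sigma r)\zeta(s-i\Sigma r)/\zeta(2s)$ with $\Delta r=r'-r''$, $\Sigma r=r'+r''$. There is no residual simple pole at $s=1$ to add to the pair at $s=1\pm i\Delta r$; when $r'=r''$ these two poles coalesce into a \emph{double} pole at $s=1$ (matching the $\zeta(s)^2$ in Luo--Sarnak), not a triple pole. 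Your ``finite-difference quotient'' picture is still morally correct for the two simple poles, but the bookkeeping with a phantom third pole would give the wrong constant.

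Second, and more importantly, your explanation of the threshold $\omega(r_j)=o(\log\log r_j/\log r_j)$ is misattributed. You invoke Vinogradov--Korobov to get $\frac{d}{dr}\log\zeta(1+2ir)\ll(\log r)^{2/3}$ (up to $\log\log$ factors). But then $\omega\cdot(\log r_j)^{2/3}=o(1)$ would hold for windows as wide as $o((\log r_j)^{-2/3-\varepsilon})$, considerably larger than $o(\log\log r_j/\log r_j)$---so the threshold in the theorem would not be ``precisely'' explained by your error analysis, it would be slack. The paper instead uses the classical Weyl--Hadamard--de la Vall\'ee Poussin bound $\frac{d}{dr}\log\zeta(1+2ir)\lesssim \log r/\log\log r$, for which the linearization error $\Delta r\cdot O(\log r_j/\log\log r_j)$ is $o(1)$ \emph{exactly} when $\Delta r=o(\log\log r_j/\log r_j)$; that is where the exponent in the hypothesis comes from. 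The Gamma factors contribute the leading phase $2\Delta r\log r_j$ plus an $O(\Delta r)$ error, which is already harmless; it is the $\zeta'/\zeta(1+it)$ contribution that sets the window size. Swapping in the stronger VK input would in fact prove a slightly stronger theorem but does not match the threshold asserted.

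Third, your test class omits the nonzero-weight incomplete Eisenstein series (and holomorphic cusp forms, which are not $K$-translates of Maass forms). The weight-$2k$ incomplete Eisenstein series genuinely contribute a term of order $\frac{1}{2k}(e^{2i\Delta r\log r_j}-1)$ to $\mu_{r',r''}$; it is lower order after dividing by $2\log r_j$, but this has to be \emph{shown}, via the $\ _3F_2$-transformation estimates of Jakobson, and the argument that it vanishes as $\Delta r\to 0$ rests on a cancellation between two hypergeometric contributions that is not automatic. A proof that stops at weight-$0$ incomplete Eisenstein series plus cusp-form $K$-translates has not actually spanned $L^2(S^*M)$ and leaves a gap in the density argument.
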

In other words, as far as quantum limits of microlocal lifts are concerned, those of Eisenstein $o(1/\log{r})$-quasimodes are indistinguishable, in the limit $r\to\infty$, from those of true Eisenstein series.  But if we weaken the condition and consider $\left(\frac{C}{\log{r}}\right)$-quasimodes for a constant $C>0$, then the total mass can be diminished, with extra localization in the cusp.  This last aspect will be explored in greater detail in section~\ref{threshold}.

Another consequence of Theorem~\ref{sharp asymptotic} is the following
\begin{Corollary}[\cite{meRoman}]\label{Ehrenfest cutoff}
Let $\{h_j\}$ satisfy the quasimode condition (\ref{quasimode condition}) with $\omega(r_j)=o(\log\log{r_j}/\log{r_j})$, and $\int_r h_j(r)dr=1$.  Then
the microlocal lift $\mu_{h_j}$ is asymptotic to
$$\mu_{h_j}(f)  \sim \frac{3}{\pi}\int_{S^*M} f(x)dx  \cdot  \int_{t=0}^{ 2\log{r_j}}|\widehat{h_j}(t)|^2 dt$$
\end{Corollary}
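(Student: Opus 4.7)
The plan is to deduce the corollary directly from the middle formula of Theorem~\ref{sharp asymptotic} by recognizing the oscillatory kernel there as the time-average of a pure exponential, and then applying Fubini to factor the resulting double integral into a product of Fourier transforms. This is purely a formal manipulation; all of the analytic content has already been absorbed into the Maass--Selberg computation.

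Concretely, set $T := 2\log r_j$ and use the elementary identity
$$\frac{e^{iTx}-1}{iTx} = \frac{1}{T}\int_0^T e^{itx}\,dt$$
with $x = r'-r''$. Substituting this into the double integral in Theorem~\ref{sharp asymptotic} and interchanging the order of integration (justified by the smoothness and rapid decay of $h_j$), the inner integral factors as
$$\iint h_j(r')\overline{h_j}(r'')\, e^{it(r'-r'')}\,dr'\,dr'' = \left(\int h_j(r')e^{itr'}\,dr'\right)\overline{\left(\int h_j(r'')e^{itr''}\,dr''\right)} = |\widehat{h_j}(t)|^2,$$
adopting the Fourier convention $\widehat{h}(t) := \int h(r)e^{itr}\,dr$ (and noting that when $h_j$ is real-valued the sign in the exponent is immaterial anyway). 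Multiplying through by $T=2\log r_j$ cancels the $1/T$ prefactor and turns the $o(1)$ error from Theorem~\ref{sharp asymptotic} into $o(\log r_j)$, yielding
$$\mu_{h_j}(f) = \frac{3}{\pi}\int_{S^*M} f(x)\,dx\cdot \int_0^{2\log r_j}|\widehat{h_j}(t)|^2\,dt + o(\log r_j),$$
which is the claimed asymptotic.

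There is essentially no obstacle to overcome: the trigonometric identity is elementary, Fubini is trivial given the regularity of $h_j$, and the Fourier factorization is immediate. The only mildly delicate point is the interpretation of $\sim$ when the main term is small; since $|\widehat{h_j}(0)|^2 = 1$ one has at least a positive-constant contribution to $\int_0^{2\log r_j}|\widehat{h_j}(t)|^2\,dt$, but the cleanest reading is the additive $o(\log r_j)$ statement above. Conceptually, the corollary simply re-packages Theorem~\ref{sharp asymptotic} in the Fourier-dual picture, making explicit the Ehrenfest-type cutoff: the mass of the microlocal lift is controlled by the $L^2$-mass of $\widehat{h_j}$ inside the window $[0, 2\log r_j]$, and Fourier mass of $h_j$ beyond the Ehrenfest time $2\log r_j$ contributes negligibly.
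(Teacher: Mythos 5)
Your derivation is correct, and it is essentially the only sensible way to get Corollary~\ref{Ehrenfest cutoff} from the middle formula of Theorem~\ref{sharp asymptotic}. The paper itself does not spell out a proof --- the Corollary is attributed to the forthcoming work \cite{meRoman} and is left as an interpretive restatement --- so there is nothing to compare against line by line, but the intended argument is precisely yours: write the oscillatory kernel as a time-average,
$$\frac{e^{iT(r'-r'')}-1}{iT(r'-r'')} = \frac{1}{T}\int_0^T e^{it(r'-r'')}\,dt, \qquad T=2\log r_j,$$
interchange integrals, and factor the bilinear form into $|\widehat{h_j}(t)|^2$. The Fubini step is harmless given the smoothness and decay of $h_j$ (and can in any case be reduced, by the approximation argument at the end of Section~4, to compactly supported $\tilde h_j$). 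Your comment about the meaning of ``$\sim$'' is well taken: since Theorem~\ref{sharp asymptotic} only delivers an additive $o(1)$ after dividing by $2\log r_j$, the honest conclusion is $\mu_{h_j}(f) = \frac{3}{\pi}\int f \cdot \int_0^{2\log r_j}|\widehat{h_j}|^2\,dt + o(\log r_j)$, and this is how the paper's ``$\sim$'' should be read here (note that in the regime $\omega(r_j)\gtrsim 1/\log r_j$, the integral $\int_0^{2\log r_j}|\widehat{h_j}|^2\,dt$ need not be comparable to $\log r_j$, so a genuine ratio-$\to 1$ reading would require additional hypotheses). In short: correct, and matching the argument the paper implicitly has in mind.
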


Since the volume of the modular surface is $\frac{\pi}{3}$, the quantity $\frac{3}{\pi}\int f(x)dx$ is the normalized average of $f$ on $S^*M$; thus, in other words, Corollary~\ref{Ehrenfest cutoff} says that the total mass of $\mu_{h_j}$ that does not escape to the cusp is given by $\int_{t=0}^{2\log{r_j}}|\widehat{h_j}(t)|^2 dt$.

Since $\widehat{h_j}$ represents wave propagation times, we interpret this result as saying that the Eisenstein plane waves follow geodesic paths and equidistribute on the surface up to twice the Ehrenfest time $2T_E=2\log{r_j}$, and then ``immediately afterwards" disengage from their geodesic paths and retreat back to the cusp.  That the plane waves should escape to the cusp so quickly is somewhat peculiar, and is the topic of further work with Roman Schubert \cite{meRoman}.

\vspace{.2in}

Though it may appear at first glance that Theorem~\ref{equidist} and Theorem~\ref{sharp asymptotic} disagree regarding the threshold for QUE, the resolution lies in the proportionality constants implicit in (\ref{LS-Jak}).  Up to windows of size $\omega(r) = o(1/\log{r})$, we not only have the equidistribution implied by Theorem~\ref{equidist}, but the implicit proportionality constant is preserved, according to Theorem~\ref{sharp asymptotic}.  Once this threshold is passed and we move into larger logarithmic windows $\omega(r) \gtrsim 1/\log{r}$, this proportionality constant drops--- due to the introduction of interferences that decrease the mass of the measures in compact sets, with more mass localizing in the cusp.  However, Theorem~\ref{equidist} says that the  remaining mass still equidistributes in compact sets all the way out  to massive windows of size $\omega(r) = o(1)$.  This is as large a window as one can take and still hope to have QUE, as it is straightforward to construct examples in windows of size $\gtrsim 1$ that do not equidistribute (see also \cite{localized_example} for a stronger statement regarding arbitrary subspaces of dimension $\gtrsim r$).

The $o(1)$-quasimodes allowed by Theorem~\ref{equidist} are significantly weaker than the logarithmic windows of Conjecture~\ref{hypconj}--- and the threshold at which QUE is known to fail on compact hyperbolic surfaces \cite{locquasi}.  The reason for this is explored in section~\ref{threshold}, and can be understood in terms of the inherent arithmetic structure of the Eisenstein series on congruence surfaces.  There is also a dynamical-geometric aspect, not unrelated to the arithmetic, of the cusp at infinity.  In fact, we will see that there is indeed an analog of the result of \cite{locquasi} at the $o(1/\log{r})$ threshold, except that the localization is on cusp-bound geodesics, whose mass escapes to the cusp and thus does not appear in the asymptotics that yield Theorem~\ref{equidist}.  Lastly, there is another special spectral property of congruence surfaces that is intimately tied to the arithmetic, and highly relevant to these results; namely, the abundance of cusp forms, and sparsity of the Eisenstein series, first observed by Selberg.  We would expect that on a generic non-compact surface of finite volume, where the continuous spectrum is expected to be dominant (see \cite{PhillipsSarnak}), Theorem~\ref{equidist} should not hold.

\vspace{.2in}

The proofs of Theorems~\ref{equidist} and \ref{sharp asymptotic} are based on the papers of Luo-Sarnak \cite{LuoSarnak} and Jakobson \cite{Jak} on (true) Eisenstein series, in conjunction with the following  observation, which is an immediate consequence of Stirling's formula for the Gamma function:
\begin{Lemma}\label{Gamma}
Let $r_j\to\infty$, and suppose we have a function $\omega(r)=o(1)$.  Then for any $r', r''$ satisfying
$$|r'-r_j|, |r''-r_j| <\omega(r_j)$$ 
we have,
for any fixed $\sigma\in\mathbb{R}$,
$$\frac{|\Gamma(\sigma+ir')|} { |\Gamma(\sigma+ir'')|} = 1 + o_\omega(1)$$
with an error term $o_\omega(1)\to0$ as $r_j\to\infty$, depending on the decay rate of $\omega$.  

Moreover, if $\omega(r)=o(\log\log{r}/\log{r})$, then we have
$$\frac{\Gamma(\sigma +ir')}{ \Gamma(\sigma+ir'')} = e^{i(r'-r'')\log{r_j}} + o_\omega(1)$$
and in particular, if in fact $\omega(r) = o(1/\log{r})$, then
$$\frac{\Gamma(\sigma+ir')}{\Gamma(\sigma+ir'')} = 1+o_\omega(1)$$
\end{Lemma}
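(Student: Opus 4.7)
The plan is a direct application of Stirling's asymptotic expansion of $\log\Gamma(s)$, tracking how it varies under small perturbations of $t=\Im(s)$. First I would record that for fixed $\sigma\in\mathbb{R}$ and $t\to+\infty$, inserting $\log(\sigma+it) = \log t + i\pi/2 + O(1/t)$ into
$$\log\Gamma(\sigma+it) = \bigl(\sigma+it-\tfrac{1}{2}\bigr)\log(\sigma+it) - (\sigma+it) + \tfrac{1}{2}\log(2\pi) + O(1/t)$$
and separating real and imaginary parts yields
$$\log|\Gamma(\sigma+it)| = (\sigma-\tfrac{1}{2})\log t - \tfrac{\pi t}{2} + \tfrac{1}{2}\log(2\pi) + O(1/t),$$
$$\arg\Gamma(\sigma+it) = t\log t - t + (\sigma-\tfrac{1}{2})\tfrac{\pi}{2} + O(1/t).$$

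For the modulus ratio I would set $t = r'$ and $t = r''$, subtract, and use $r',r'' = r_j + O(\omega(r_j))$: the surviving terms $(\sigma-\tfrac{1}{2})(\log r' - \log r'')$ and $-\pi(r'-r'')/2$ are $O(\omega/r_j)$ and $O(\omega)$ respectively, hence $o(1)$, which gives the first conclusion upon exponentiating. For the full ratio, I would subtract the phase expansion, with the only nontrivial step being the second-order Taylor expansion $r'\log r' - r''\log r'' = (r'-r'')(\log r_j + 1) + O(\omega^2/r_j)$ around $r_j$. The $+(r'-r'')$ piece cancels the $-(r'-r'')$ term from Stirling, leaving the phase difference equal to $(r'-r'')\log r_j + o(1)$; combined with the modulus estimate this yields $\Gamma(\sigma+ir')/\Gamma(\sigma+ir'') = e^{i(r'-r'')\log r_j} + o(1)$.

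The third conclusion is then immediate, since $\omega = o(1/\log r)$ forces $(r'-r'')\log r_j = O(\omega(r_j)\log r_j) = o(1)$ and hence $e^{i(r'-r'')\log r_j} = 1 + o(1)$. There is no genuine obstacle here; the only point requiring attention is verifying that the Stirling remainder $O(1/r_j)$ and the Taylor remainder $O(\omega^2/r_j)$ both remain $o(1)$ under the standing hypothesis $\omega = o(1)$, which they do automatically. The three regimes in the statement correspond, respectively, to controlling only the modulus, to treating the phase $(r'-r'')\log r_j$ as a genuine oscillation, and to absorbing the phase itself into the $o(1)$ error.
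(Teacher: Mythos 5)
Your argument is correct and follows exactly the path the paper indicates: the author presents Lemma~\ref{Gamma} as ``an immediate consequence of Stirling's formula,'' and later (in the proof of Lemma~\ref{|B_0|}) invokes the derivative form $\frac{d}{dr}\log\Gamma(\tfrac12+ir)=\log r+O(1)$, which is the mean-value-theorem packaging of the same Taylor expansion you carry out explicitly. One small observation worth making: your computation shows the phase asymptotic $\Gamma(\sigma+ir')/\Gamma(\sigma+ir'')=e^{i(r'-r'')\log r_j}+o(1)$ already holds under the weaker hypothesis $\omega=o(1)$ (the phase error is $O(\omega)+O(1/r_j)$, both $o(1)$); the stronger hypothesis $\omega=o(\log\log r/\log r)$ in the statement is what is needed later for the $\zeta(1\pm 2ir)$ factors via the Weyl--Hadamard--de la Vall\'ee Poussin bound, not for $\Gamma$.
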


In a nutshell, Lemma~\ref{Gamma} allows one to push through all of the estimates of \cite{LuoSarnak} and \cite{Jak} with $\mu_{r', r''}$ in place of $\mu_{r_j}$, as it becomes apparent that the 
greatest sensitivity to changes in $r', r''$ comes from the Gamma factors involved.  Thus all estimates of decaying error terms (where all factors involved are estimated in absolute value) hold whenever $r', r''=r_j+o(1)$.  The most delicate estimates occur when computing  $\mu_{r', r''}$ against incomplete Eisenstein series; but even without taking absolute values, all terms retain their asymptotics  for $r',r'' = r_j+ o(\log\log{r_j}/\log{r_j})$, save for some Gamma factors whose phase changes in accordance with Lemma~\ref{Gamma}.  Thus we get the precise asymptotics of Theorem~\ref{sharp asymptotic}.

\section{Fourier Expansions}\label{expansions}

We will need the Fourier expansions of our various  test functions:  holomorphic cusp forms of weight $2k$, ``shifted" Maass forms of eigenvalue $\frac{1}{4}+t_j^2$ and weight $2k$, and Eisenstein series of weight $2k$.  We will test the equidistribution of $\mu_{r',r''}$ by integrating it against holomorphic cusp forms of weight $2k\neq 0$ and Maass cusp forms of eigenvalue $\neq 0$, which are orthogonal to constants, and incomplete Eisenstein series of weight $2k$.  Together these span a dense subspace of $L^2(S^*M)$, and Theorem~\ref{equidist} will follow by showing that the integral of the first two against $d\mu_{r',r''}$ decay, while the integral against an incomplete Eisenstein series is asymptotically proportional to its average.

In this section we recall the relevant Fourier expansions that we will need; for further details, see \cite{Jak}.

\subsection{Holomorphic cusp forms}
A holomorphic cusp form of weight $2k$ is a holomorphic function satisfying
$$F(\gamma z) = (cz+d)^{2k}F(z)$$
for all $\gamma = \begin{pmatrix}	\ast & \ast\\ c & d	\end{pmatrix} \in PSL(2,\mathbb{Z})$.
$F$ is assumed to be holomorphic and vanishing at infinity, and so has a Fourier expansion
$$F(z) = \sum_{n>0} c(n) e(nz) = \sum_{n>0} c(n) e^{-2\pi ny}e(nx)$$
where we have used the notation $e(w) = e^{2\pi i w}$.  Then the function
$$f(x,y,\theta) = e^{-2ik\theta}y^k F(x,y) = e^{-2ik\theta}y^k \sum_{n>0} c(n) e^{-2\pi ny} e(nx)$$
is a Casimir eigenfunction of eigenvalue $k(1-k)$ on $S^*M$.  We may (and will) also assume that $F$ is an eigenfunction of all Hecke operators, whereby the coefficients $c(n)$ are multiplicative, and we may form the $L$-function
$$L(F, s) := \sum_{n=1}^\infty \frac{c(n)}{n^s}$$
which has analytic continuation and functional equation, with critical line $Re(s)=k$.

Note that $k=0$ forces $f$ to be constant, and so we need only concern ourselves with holomorphic forms of weight $k\neq 0$.

\subsection{Shifted Maass cusp forms}

Let $\varphi_j$ be a Hecke-Maass cusp form; that is, an eigenfunction of Laplacian in $PSL(2,\mathbb{Z})\backslash\mathbb{H}$, of Laplace eigenvalue $\frac{1}{4}+t_j^2$, that is also an eigenfunction of all Hecke operators.  Writing its Fourier expansion and applying separation of variables, we obtain an ordinary differential equation for the coefficients, whose solution yields the expansion
$$\varphi_j(x,y) = \sum_{n\neq 0} \frac{c_j(|n|)}{\sqrt{|n|}} W_{0, it_j}(4\pi |n|y) e(nx)$$
where $W_{k,\mu}$ are Whittaker functions (for $k=0$ these are scaled Bessel functions).  Applying raising and lowering operators gives additional Casimir eigenfunctions on $L^2(S^*M)$, whose Fourier expansions are then given by \cite[(1.8)]{Jak}
\begin{eqnarray*}
\varphi_{j,k} & = & \frac{(-1)^k\Gamma(\frac{1}{2}+it_j)}{\Gamma(\frac{1}{2}+ k+it_j)}\sum_{n>0}\frac{c_j(|n|)}{\sqrt{|n|}} W_{k,it_j}(4\pi |n|y) e(nx)\\
& & \pm \frac{(-1)^k\Gamma(\frac{1}{2}+it_j)}{\Gamma(\frac{1}{2}- k+it_j)}\sum_{n<0}\frac{c_j(|n|)}{\sqrt{|n|}} W_{-k,it_j}(4\pi |n|y) e(nx)
\end{eqnarray*}
where the sign in the second line is positive if $\varphi$ is an even Maass form, and negative if $\varphi$ is odd.

Since the coefficients $c_j(n)$ are again Hecke eigenvalues and therefore multiplicative, we form the $L$-function
$$L(\varphi_j, s) = \sum_{n=1}^\infty \frac{c_j(n)}{n^s}$$
which again has analytic continuation and functional equation, with critical line $Re(s)=\frac{1}{2}$.

\subsection{Eisenstein series}

For weight $0$ Eisenstein series, we have the expansion \cite[section 3.4]{Iwaniec}
\begin{equation}\label{eisen 0 expansion}
E(z,\frac{1}{2}+ir) =  y^{\frac{1}{2}+ir} + \phi(\frac{1}{2}+ir)y^{\frac{1}{2}-ir} + \frac{4\sqrt{y}}{\xi(1+2ir)}\sum_{n=1}^\infty n^{ir}\sigma_{-2ir}(n) K_{ir}(2\pi ny)\cos(2\pi nx)
\end{equation}
writing, here and throughout, that $z=x+iy$ on the upper-half plane.

Finally, if $k> 0$, we have for Eisenstein series of weight $\pm 2k$ \cite[(1.6) and (1.7)]{Jak} 
\begin{eqnarray}
E_{-2k} (z, 1/2+ir) & = & y^{1/2+ir} + \frac{(-1)^k\Gamma^2(\frac{1}{2}+ir)}{\Gamma(\frac{1}{2}-k+ir)\Gamma(\frac{1}{2}+k+ir)}\phi(\frac{1}{2}+ir)y^{1/2-ir}\nonumber\\
& & + \frac{(-1)^k\Gamma(\frac{1}{2}+ir)}{2\Gamma(\frac{1}{2}+k+ir)\xi(1+2ir)} \sum_{n>0} \frac{|n|^{ir}\sigma_{-2ir}(|n|)}{\sqrt{|n|}}W_{k, ir}(4\pi|n|y)e(nx)\nonumber\\
& & + \frac{(-1)^k\Gamma(\frac{1}{2}+ir)}{2\Gamma(\frac{1}{2}-k+ir)\xi(1+2ir)} \sum_{n<0} \frac{|n|^{ir}\sigma_{-2ir}(|n|)}{\sqrt{|n|}}W_{-k, ir}(4\pi|n|y)e(nx)\label{eisen -2k expansion}\nonumber\\
\\
E_{2k} (z, 1/2+ir) & = & y^{1/2+ir} + \frac{(-1)^k\Gamma^2(\frac{1}{2}+ir)}{\Gamma(\frac{1}{2}-k+ir)\Gamma(\frac{1}{2}+k+ir)}\phi(\frac{1}{2}+ir)y^{1/2-ir}\nonumber\\
& & + \frac{(-1)^k\Gamma(\frac{1}{2}+ir)}{2\Gamma(\frac{1}{2}+k+ir)\xi(1+2ir)} \sum_{n<0} \frac{|n|^{ir}\sigma_{-2ir}(|n|)}{\sqrt{|n|}}W_{k, ir}(4\pi|n|y)e(nx)\nonumber\\
& & + \frac{(-1)^k\Gamma(\frac{1}{2}+ir)}{2\Gamma(\frac{1}{2}-k+ir)\xi(1+2ir)} \sum_{n>0} \frac{|n|^{ir}\sigma_{-2ir}(|n|)}{\sqrt{|n|}}W_{-k, ir}(4\pi|n|y)e(nx)\nonumber\\
\label{eisen 2k expansion}
\end{eqnarray}
where here and throughout,
\begin{eqnarray*}
\xi(s) & = & \pi^{-s/2}\Gamma(\frac{s}{2})\zeta(s)\\
\phi(s) & = & \frac{\xi(2s-1)}{\xi(2s)}\\
\sigma_s(n) & = & \sum_{d|n}d^s
\end{eqnarray*}

With these definitions, we can write the microlocal lift $\mu_{r', r''}$  as the distribution
$$\int_{S^*M} f d\mu_{r',r''} := 
\int_{S^*M} f(z,\theta) E(z,\frac{1}{2}-ir')\sum_{k=-\infty}^\infty E_{2k}(z,\frac{1}{2}+ir'')e^{2ik\theta} \frac{dxdy}{y^2} d\theta$$

\section{Integral Against Shifted Maass Forms and Holomorphic Cusp Forms}

For $r'=r''=r$, the integral of $d\mu_r$ against holomorphic cusp forms and shifted Maass cusp forms was shown in \cite{LuoSarnak} and \cite{Jak} to decay like $|r|^{-1/6+\epsilon}$, due to subconvex estimates for the relevant $L$-functions.  This polynomial decay in $|r|$ is certainly robust enough to withstand our small variations in $r'$ and $r''$, and the arguments of \cite{LuoSarnak} and \cite{Jak} go through without change.  We bring here the argument for holomorphic cusp forms, and the proof for shifted Maass cusp forms follows analogously, using the estimates of \cite{Jak} for the hypergeometric series involved.  The main thrust of the argument is the factoring of a degree 4 $L$-function into a product of two degree 2 $L$-functions--- corresponding to the form $F$--- for which we have a non-trivial subconvexity bound, due to Good \cite{Good}.  The parallel subconvexity bound for Maass forms is due to Meurman \cite{Meurman}.

\begin{Proposition}{\em (see \cite[Proposition 2.1]{Jak})}
Let $r_j\to\infty$, and suppose we have a function $\omega(r_j)=o(1)$.  Then for any $r', r''$ satisfying
$$|r'-r_j|, |r''-r_j| <\omega(r_j)$$ 
we have,
for any weight $2k$ holomorphic  cusp form $F$, that
$$\int_{S^*M} F d\mu_{r', r''} \lesssim_{F, k,\epsilon,\omega} |r_j|^{-1/6+\epsilon}$$
\end{Proposition}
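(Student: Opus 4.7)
The plan is to follow the Rankin--Selberg argument of Luo--Sarnak and Jakobson for the diagonal case $r'=r''=r_j$, and to verify that Lemma~\ref{Gamma} suffices to push it through off-diagonal. For a weight-$2k$ holomorphic cusp form $F$, its Casimir eigenfunction on $S^*M$ is $f(z,\theta)=e^{-2ik\theta}y^k F(z)$; the $\theta$-integration in the definition of $\mu_{r',r''}(f)$ selects the weight-$2k$ term from the sum $\sum_{k'} E_{2k'}(z,\tfrac{1}{2}+ir'')e^{2ik'\theta}$, reducing the problem to estimating
$$I(r',r'')\;=\;\int_{\Gamma\backslash\mathbb{H}} y^{k}\,F(z)\,E\bigl(z,\tfrac{1}{2}-ir'\bigr)\,E_{2k}\bigl(z,\tfrac{1}{2}+ir''\bigr)\,\frac{dx\,dy}{y^2}.$$
Unfolding the weight-$0$ Eisenstein turns this into an integral over $\Gamma_\infty\backslash\mathbb{H}$, and the $x$-integration picks out the zeroth Fourier coefficient of $F\cdot E_{2k}$.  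Since $F$ is supported on positive frequencies, this pairs each $c(n)e^{-2\pi ny}$ with the $(-n)$-th coefficient of $E_{2k}(z,\tfrac{1}{2}+ir'')$ from (\ref{eisen 2k expansion}), involving $W_{k,ir''}(4\pi ny)$ together with the Gamma/$\xi$ prefactor.

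After the change of variables $u=4\pi n y$, the $y$-integral separates as a Mellin transform
$$M(r',r'')\;=\;\int_{0}^{\infty} u^{k-\tfrac{3}{2}-ir'}\,e^{-u/2}\,W_{k,ir''}(u)\,du,$$
evaluable in closed form in terms of Gamma functions, multiplied by the arithmetic Dirichlet sum $\sum_{n\geq 1} c(n)\sigma_{-2ir''}(n)n^{-(k-ir'-ir'')}$. The standard Rankin--Selberg factorization yields
$$\sum_{n\geq 1}\frac{c(n)\sigma_{-2ir''}(n)}{n^{k-i(r'+r'')}}\;=\;\frac{L(F,\,k-i(r'+r''))\,L(F,\,k+i(r''-r'))}{\zeta(2k-2ir')},$$
so $I(r',r'')$ is a product of two critical-line values of $L(F,\cdot)$, times explicit Gamma, $\xi(1+2ir'')$, and Mellin prefactors.

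In the diagonal setting of Luo--Sarnak/Jakobson one obtains $L(F,k-2ir_j)$, controlled by Good's subconvexity bound $O(|r_j|^{1/3+\epsilon})$, together with the fixed value $L(F,k)$; these combine with the $|r_j|^{-1/2}$ decay of the prefactors to give the advertised $|r_j|^{-1/6+\epsilon}$.  Off-diagonal, the first factor becomes $L(F,k-i(r'+r''))$ at height $r'+r''=2r_j+o(1)$, to which Good's bound still gives $O(|r_j|^{1/3+\epsilon})$ with constants uniform in the $o(1)$ perturbation; the second factor $L(F,k+i(r''-r'))$ lies within $o(1)$ of the real point $k$, hence is bounded by continuity, since $L(F,\cdot)$ is entire and uniformly bounded on compacta of the critical line.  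The denominator $\zeta(2k-2ir')$ is bounded above and below because $\mathrm{Re}(2k-2ir')=2k\geq 2$.  Lemma~\ref{Gamma} certifies that the remaining Gamma-factor ratio $\Gamma(\tfrac{1}{2}+ir'')/\Gamma(\tfrac{1}{2}+k+ir'')$, the $\xi(1+2ir'')^{-1}$, and the Mellin prefactor $M(r',r'')$ each have the same magnitude, up to a multiplicative $1+o_{\omega}(1)$, as at $r'=r''=r_j$.

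The main technical point is simply uniformity of all these estimates in the $o(1)$-sized perturbations of $r',r''$ about $r_j$: the Gamma-factor ratios are controlled precisely by Lemma~\ref{Gamma}; the Mellin transform $M(r',r'')$ obeys the same saddle-point asymptotics in $r_j$ used by Jakobson; and Good's subconvex bound is polynomial in height, so the $o(1)$ shift in imaginary part is absorbed in the implied constant. Assembling the factors then gives $I(r',r'')\lesssim_{F,k,\epsilon,\omega}|r_j|^{-1/6+\epsilon}$, as claimed.
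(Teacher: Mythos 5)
Your proposal follows the same Rankin--Selberg unfolding strategy as the paper, but with the roles of the two Eisenstein series swapped. The paper introduces a complex variable $s$ into the weight-$2k$ Eisenstein $E_{-2k}(z,s)$, unfolds for $\mathrm{Re}(s)>1$, analytically continues to $s=\tfrac{1}{2}+ir''$, and then substitutes the Fourier expansions of $F$ and of the weight-$0$ series $E(z,\tfrac{1}{2}-ir')$; the resulting Mellin transform is $\int_0^\infty W_{0,-ir'}(u)e^{-u/2}u^{k-\frac{1}{2}+ir''}\frac{du}{u}=\frac{\Gamma(k+i\Sigma r)\Gamma(k-i\Delta r)}{\Gamma(k+\frac{1}{2}+ir'')}$, and the Dirichlet sum factors as $\frac{L(F,k+i\Sigma r)L(F,k-i\Delta r)}{\zeta(2k+2ir'')}$. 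You instead unfold the weight-$0$ Eisenstein and keep the Fourier expansion of the weight-$2k$ one, arriving at the dual Mellin transform $\int_0^\infty u^{k-\frac{3}{2}-ir'}e^{-u/2}W_{k,ir''}(u)\,du$ and the dual factorization $\frac{L(F,k-i\Sigma r)L(F,k-i\Delta r)}{\zeta(2k-2ir')}$. Both routes produce the same absolute-value estimates: $|\Gamma(k\pm i\Sigma r)|$ agree, $L(F,k-i\Delta r)$ is bounded since $\Delta r=o(1)$, Good's bound controls the height-$\Sigma r\approx 2r_j$ factor, and the $\zeta$ in the denominator has $\mathrm{Re}=2k\geq 2$ so is harmless. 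So the argument is correct and yields the same $|r_j|^{-1/6+\epsilon}$ bound.

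Two small points worth flagging. First, you should state explicitly that the unfolding of $E(z,\cdot)$ only converges for $\mathrm{Re}(s)>1$, so one must introduce a free parameter $s$, unfold there, and then analytically continue to $s=\tfrac{1}{2}-ir'$; the unfolded strip integral is visibly holomorphic in $s$ (the $y$-integral is a ratio of Gamma functions and the $n$-sum is a Dirichlet series), so the continuation is legitimate, but the step should not be elided. Second, there is a weight/sign bookkeeping issue: for the product $y^kF(z)\cdot E_{\pm 2k}(z,\tfrac{1}{2}+ir'')$ to be $\Gamma$-invariant (so that the weight-$0$ Eisenstein can be unfolded against it), one must pair with the Eisenstein series of the opposite automorphy weight, which then determines whether the relevant Whittaker coefficient at negative frequencies is $W_{k,ir''}$ or $W_{-k,ir''}$. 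This does not affect any absolute-value estimate, but it is exactly the kind of sign bookkeeping that the paper's expansions (\ref{eisen -2k expansion}) and (\ref{eisen 2k expansion}) are set up to track, and in a complete write-up you would need to commit to one convention and verify it.

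In short: same approach, dual implementation, correct conclusion; the only genuine omission is the analytic-continuation step that justifies unfolding an Eisenstein series evaluated on the critical line.
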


\begin{proof}
We have for $Re(s)>1$ 
\begin{eqnarray*}
\lefteqn{\int_{\Gamma\backslash\mathbb{H}}F(z,0) E(z, \frac{1}{2}-ir') E_{-2k}(z, s) dArea}\\
& = & \int_{\Gamma\backslash\mathbb{H}} F(z,0) E(z, \frac{1}{2}-ir')  \sum_{\gamma\in\Gamma_\infty\backslash\Gamma} Im(\gamma z)^{s}(\epsilon_\gamma(z))^{2k} dArea\\
& = & \sum_{\gamma\in\Gamma_\infty\backslash\Gamma} \int_{\gamma(\Gamma\backslash\mathbb{H})} F(x,y) E((x,y), \frac{1}{2}-ir') y^{s} \frac{dxdy}{y^2}\\
& = & \int_{y=0}^\infty \int_{x=0}^1 y^{s} F(x,y) E((x,y),\frac{1}{2}-ir') \frac{dxdy}{y^2}
\end{eqnarray*}
Via analytic continuation, we may substitute $s=\frac{1}{2}+ir''$, and use the orthogonality of the weight spaces to obtain (after integrating out $\theta$)
\begin{eqnarray*}
\int_{S^*M} F  d\mu_{r',r''} & = & \int_{\Gamma\backslash\mathbb{H}}F(z,0) E(z, \frac{1}{2}-ir') E_{-2k}(z, \frac{1}{2}+ir'') dArea\\
& = & \int_{y=0}^\infty \int_{x=0}^1 y^{\frac{1}{2}+ir''} F(x,y) E((x,y),\frac{1}{2}-ir') \frac{dxdy}{y^2}
\end{eqnarray*}

We now substitute the Fourier expansions for $F(\cdot ,0)$ and $E(\cdot, \frac{1}{2}-ir')$ and integrate out $x$ (notice this eliminates the $x$-independent terms from the expansion of the Eisenstein series), giving
\begin{eqnarray*}
\lefteqn{\int_{S^*M} F  d\mu_{r',r''}}\\
& = & \frac{1}{2\xi(1-2ir')}\sum_{n=1}^\infty \int_0^\infty y^{k+\frac{1}{2}+ir''}\frac{c(n)n^{-ir'}\sigma_{2ir'}(n)}{\sqrt{n}}e^{-2\pi ny} W_{0,-ir'}(4\pi ny)\frac{dy}{y^2}\\
& = & \frac{(4\pi)^{-k+\frac{1}{2}-ir''}}{2\xi(1-2ir')} 
\left(	\sum_{n=1}^\infty\frac{c(n)n^{-ir'}\sigma_{2ir'}(n)}{n^{k+ir''}}\right)
\left( \int_0^\infty W_{0, -ir'}(u) e^{-u/2}u^{k-\frac{1}{2}+ir''}\frac{du}{u}	\right)
\end{eqnarray*}
by changing variables to $u=4\pi ny$.  

The integral can be evaluated, and is equal to \cite[7.621.11]{G-R}
$$\int_0^\infty W_{0, -ir'}(u) e^{-u/2}u^{(k-\frac{1}{2}+ir'')-1} du = \frac{\Gamma(k+i\Sigma r)\Gamma(k-i\Delta r)}{\Gamma(k+\frac{1}{2}+ir'')}$$
where we have introduced the shorthand 
\begin{eqnarray*}
\Sigma r & = & r'+r''\\
\Delta r & = & r'-r'' 
\end{eqnarray*}
to be used throughout the paper.

The infinite sum can be factored as in \cite{LuoSarnak} to be
$$\sum_{n=1}^\infty\frac{c(n)n^{-ir'}\sigma_{2ir'}(n)}{n^{k+ir''}} 
= \frac{L(F,k+i\Sigma r)L(F, k-i\Delta r)}{\zeta(2k+2ir'')}$$

Therefore putting everything together we get
\begin{eqnarray*}
\int_{S^*M} F d\mu_{r',r''} & = & \frac{(4\pi)^{-k+\frac{1}{2}-ir''}}{2\pi^{\frac{1}{2}-ir'}} \frac{L(F, k+i\Sigma r)L(F, k-i\Delta r)}{\zeta(1-2ir')\zeta(2k+2ir'')}\frac{\Gamma(k+i\Sigma r)\Gamma(k-i\Delta r)}{\Gamma(k+\frac{1}{2}+ir'')\Gamma(\frac{1}{2}-ir')}\\
& \lesssim & \frac{L(F, k+i\Sigma r)}{\zeta(1-2ir')}\frac{\Gamma(k+i\Sigma r)}{\Gamma(k+\frac{1}{2}+ir'')\Gamma(\frac{1}{2}-ir')}
\end{eqnarray*}
since $k\geq 1$ means that $\zeta(2k+2ir'')$ is bounded uniformly as $r_j\to\infty$, as is $\Gamma(k-i\Delta r)$ and $L(F,k-i\Delta r)$.  Estimating the remaining gamma factors by Stirling's approximation
$$|\Gamma(\sigma+ir)| \sim \sqrt{2\pi}e^{-\pi |r|/2}|r|^{\sigma-\frac{1}{2}}$$
we get
$$\int_{S^*M} F d\mu_{r',r''} \lesssim \frac{L(F, k+i\Sigma r)}{\zeta(1-2ir')} |r_j|^{-1/2}$$
By the estimate \cite{Titchmarsh}
$$|\zeta(1+ir)| \gtrsim 1/\log{r}$$
and the subconvexity bound \cite{Good}
$$|L(F,k+ir)| \lesssim_{F, \epsilon} |r|^{1/3+\epsilon}$$
due to Good, the Proposition follows.
\end{proof}

\section{Integral Against Incomplete Eisenstein Series}

\subsection{Weight $0$ Incomplete Eisenstein Series --- The Main Term}\label{luo sarnak}

As in \cite{LuoSarnak} and \cite{Jak}, the most delicate analysis is evaluating $\int_{S^*M} F_\psi d\mu_{r',r''} $  where
\begin{eqnarray}
F_\psi(z,\theta) & = & \frac{1}{2\pi i}  \int_{Re(s)=2} L_\psi(s) E(z,s) ds\label{incomplete eisen 0}\\
L_\psi(s) & = &\int_{y=0}^\infty \psi(y) y^{-s}\frac{dy}{y}\nonumber
\end{eqnarray}
is an incomplete Eisenstein series (of weight $0$).  As before, we consider 
$$\left\langle Op(F_\psi) E(z,\frac{1}{2}-ir'), E(z, \frac{1}{2}+ir'')\right\rangle = \int_{\Gamma\backslash\mathbb{H}} F_\psi(z,0)E(z, \frac{1}{2}-ir')E(z, \frac{1}{2}+ir'')\frac{dxdy}{y^2}$$
noting that only the weight $0$ Eisenstein series appear, due to the orthogonality of the weight spaces and the fact that $F$ is weight $0$.  
We then substitute (\ref{incomplete eisen 0}) for $F_\psi$, and after ``unfolding" the $E(z,s)$'s in the expansion (\ref{incomplete eisen 0}) we get
\begin{eqnarray*}
\int F_\psi d\mu_{r', r''} & = & \frac{1}{2\pi i} \int_{Re(s)=2} \int_{y=0}^\infty \int_{x=0}^1 L_\psi(s) y^s E(z, 1/2-ir') E(z,1/2+ir'') \frac{dxdy}{y^2}
\end{eqnarray*}

We replace $E(z,\frac{1}{2}-ir')$ and $E(z,\frac{1}{2}+ir'')$ with their Fourier expansions (\ref{eisen 0 expansion}), and after integrating out $x$ we get
\begin{eqnarray*}
\lefteqn{\int F_\psi d\mu_{r', r''} }\\
& = & \frac{1}{2\pi i} \int_{Re(s)=2} \int_0^\infty y^sL_\psi(s) \frac{dy}{y}ds\cdot \Big(1+\phi(1/2-ir')\phi(1/2+ir'')\Big)\\
& & + \Big(\text{rapidly decreasing in } r_j\Big)\\
& & + \frac{4}{\pi i \xi(1-2ir')\xi(1+2ir'')}\int_{Re(s)=2}L_\psi(s) \left(\sum_{n=1}^\infty \frac{\sigma_{2ir'}(n)\sigma_{-2ir''}(n)}{n^{i\Delta r}}\right)\times\\
& &\quad \quad \times \int_0^\infty K_{-ir'}(2\pi ny)K_{ir''}(2\pi ny)y^s \frac{dy}{y}
\end{eqnarray*}
where the ``rapidly decreasing in $r_j$" term comes from integrating the highly oscillatory cross-terms
$$\frac{1}{2\pi i}\int_{Re(s)=2} \int_0^\infty y^{s-1\pm i(r'+r'')}L_\psi(s) dy ds$$

Now by the Mellin inversion formula 
$$\psi(y) = \frac{1}{2\pi i } \int_{Re(s)=2} L_\psi(s) y^s ds$$
and the fact that for any $r\in\mathbb{R}$ we have
\begin{eqnarray*}
\left| \phi(\frac{1}{2}+ir)\right|  & = & \left| \frac{\xi(2ir)}{\xi(1+2ir)}\right| = \left| \frac{\xi(1-2ir)}{\xi(1+2ir)}\right| =1 
\end{eqnarray*}
by the functional equation $\xi(s) = \xi(1-s)$ and $\xi(\bar{s}) = \overline{\xi(s)}$, we see that the first term
$$\left|	 \int_{Re(s)=2}\int_{0}^\infty y^sL_\psi(s) \frac{dy}{y}ds\cdot \Big( 1 + \phi(\frac{1}{2}-ir')\phi(\frac{1}{2}+ir'')\Big)	\right| \leq 2\int_0^\infty \psi(y) \frac{dy}{y} $$
is uniformly bounded in $r_j$; this is the contribution from the incoming and outgoing components of the Eisenstein series (which concentrate in the cusp).

Therefore, after interchanging summation and integration and changing variables to $u= ny$, we get
\begin{eqnarray*}
\lefteqn{\int F_\psi d\mu_{r',r''} + O(1)}\\
& = & \frac{4}{\pi i \xi(1-2ir')\xi(1+2ir'')}\int_{Re(s)=2}L_\psi(s) \left(\sum_{n=1}^\infty \frac{\sigma_{2ir'}(n)\sigma_{-2ir''}(n)}{n^{s+i\Delta r}}\right)\times\\
& &\quad \quad \times \left(\int_0^\infty K_{-ir'}(u)K_{ir''}(u)u^s \frac{du}{u}\right)
\end{eqnarray*}

The infinite sum can be evaluated, as was first done by Ramanujan \cite{Ramanujan_sum_formulas}
$$\sum_{n=1}^\infty \frac{\sigma_{2ir'}(n)\sigma_{-2ir''}(n)}{n^{s+i\Delta r}} = \frac{\zeta(s+i\Delta r) \zeta(s-i\Sigma r)\zeta(s+i\Sigma r)\zeta(s-i\Delta r)}{\zeta(2s)}$$
The integral can also be evaluated \cite[6.576.4]{G-R}  to be
$$\int_0^\infty K_{-ir'}(2\pi u)K_{ir''}(2\pi u) u^{s-1}du 
=  \frac{\Gamma(\frac{s-ir'+ir''}{2})\Gamma(\frac{s-ir'-ir''}{2})
\Gamma(\frac{s+ir'+ir''}{2})\Gamma(\frac{s+ir'-ir''}{2})}{8\pi^s\Gamma(s)}$$

We shift the line of integration to $Re(s)=\frac{1}{2}$, using Stirling's approximation
$$|\Gamma(\sigma + ir)| \sim \sqrt{2\pi}e^{-\pi|r|/2}|r|^{\sigma-\frac{1}{2}}$$
to estimate the $\Gamma$ factors and Weyl's subconvexity estimate 
$$|\zeta(\frac{1}{2} +ir)|\lesssim_\epsilon |r|^{1/6+\epsilon}$$
together with the bound \cite{Titchmarsh}
$$|\zeta(1+ir)|\gtrsim 1/ \log{r}$$
to show that the integral over $Re(s)=1/2$ is bounded by $O(|r_j|^{-1/6+\epsilon})$.  The residues at the poles $s=1\pm i\Sigma r$  also decay due to the fact that $L_\psi$ is rapidly decreasing.  Thus it remains to compute the asymptotics of the residues at $s=1\pm i\Delta r$.

We write the integrand as 
$$\zeta(s+i\Delta r)\zeta(s-i\Delta r) B_0(s)$$
where $B_0(s)$ is holomorphic near $s=1$, which leads to the expression
$$B_0(s) = \frac{1}{2}L_\psi(s)\frac{\zeta(s-i\Sigma r)\zeta(s+i\Sigma r)
\Gamma(\frac{s-i\Delta r}{2})\Gamma(\frac{s-i\Sigma r}{2})\Gamma(\frac{s+i\Sigma r}{2})\Gamma(\frac{s+i\Delta r}{2})}
{i\pi^{s+i\Delta r}\Gamma(\frac{1}{2}-ir')\Gamma(\frac{1}{2}+ir'')\zeta(1-2ir')\zeta(1+2ir'')\zeta(2s)\Gamma(s)}
$$

\begin{Lemma}
We have
$$B_0(1-i\Delta r) \sim \frac{3}{i\pi^2}\int_{S^*M}  F_\psi \frac{dxdy}{y^2}	$$
\end{Lemma}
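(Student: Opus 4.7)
The plan is to verify the asymptotic by direct substitution of $s = 1 - i\Delta r$ into the given closed form for $B_0(s)$, observing that almost all of the large factors cancel in pairs, and then taking the limit as $r_j \to \infty$.

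First I would compute the arguments that appear at $s = 1 - i\Delta r$: we have
\[
s - i\Delta r = 1 - 2i\Delta r, \quad s + i\Delta r = 1, \quad s - i\Sigma r = 1 - 2ir', \quad s + i\Sigma r = 1 + 2ir'',
\]
and $\pi^{s + i\Delta r} = \pi$. Substituting these into the numerator of $B_0$, we obtain a factor $\zeta(1 - 2ir')\zeta(1 + 2ir'')$ which cancels exactly against the same factor in the denominator, and Gamma factors $\Gamma(\tfrac{1}{2} - ir')$ and $\Gamma(\tfrac{1}{2} + ir'')$ (coming from $\Gamma(\tfrac{s - i\Sigma r}{2})$ and $\Gamma(\tfrac{s + i\Sigma r}{2})$) which also cancel. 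What remains is the compact expression
\[
B_0(1 - i\Delta r) = \frac{1}{2} L_\psi(1 - i\Delta r) \cdot \frac{\Gamma(\tfrac{1}{2} - i\Delta r)\,\Gamma(\tfrac{1}{2})}{i\pi\, \zeta(2 - 2i\Delta r)\,\Gamma(1 - i\Delta r)}.
\]

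Next I would invoke the hypothesis $\omega(r_j) = o(1)$, which forces $|\Delta r| \leq 2\omega(r_j) \to 0$. Since $L_\psi$ is entire (as $\psi \in C_c^\infty$), and the Gamma and zeta factors are continuous and nonvanishing at the relevant points, each factor converges to its value at $\Delta r = 0$. Using $\Gamma(\tfrac{1}{2}) = \sqrt{\pi}$, $\Gamma(1) = 1$, and $\zeta(2) = \pi^2/6$, the limit simplifies to
\[
B_0(1 - i\Delta r) \;\sim\; \frac{L_\psi(1)}{2} \cdot \frac{\pi}{i\pi \cdot \pi^2/6} \;=\; \frac{3 L_\psi(1)}{i\pi^2}.
\]

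Finally, I would identify $L_\psi(1)$ with the claimed integral. By definition $L_\psi(1) = \int_0^\infty \psi(y)\, y^{-2}\, dy$, and by the standard unfolding of the incomplete Eisenstein series (Mellin inverting (\ref{incomplete eisen 0}) gives $F_\psi(z) = \sum_{\gamma \in \Gamma_\infty \backslash \Gamma} \psi(\mathrm{Im}\, \gamma z)$) we have
\[
\int_{\Gamma \backslash \mathbb{H}} F_\psi(z)\, \frac{dx\, dy}{y^2} \;=\; \int_0^\infty \int_0^1 \psi(y)\, \frac{dx\, dy}{y^2} \;=\; \int_0^\infty \psi(y)\, \frac{dy}{y^2},
\]
which matches the right-hand side of the lemma (with $F_\psi$ regarded as a weight-zero, hence $\theta$-independent, function on $S^*M$). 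No real obstacle arises; the only delicacy is confirming that the $\Delta r \to 0$ limit is genuinely achieved under the hypothesis $\omega(r_j) = o(1)$ rather than merely bounded, but this is immediate from the quasimode localization assumption.
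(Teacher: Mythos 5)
Your proof is correct and matches the paper's argument essentially line for line: you perform the same cancellation of the $\zeta(1\pm 2ir')$, $\zeta(1\pm 2ir'')$ and half-integer $\Gamma$ factors at $s=1-i\Delta r$, arrive at the same reduced expression, evaluate the remaining bounded factors in the $\Delta r\to 0$ limit using $\Gamma(\tfrac12)=\sqrt\pi$ and $\zeta(2)=\pi^2/6$, and identify $L_\psi(1)$ with the unfolded integral of $F_\psi$. The only difference is cosmetic: you spell out the unfolding identity for $\int F_\psi\,\frac{dx\,dy}{y^2}$, whereas the paper states it without comment.
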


\begin{proof}
Substituting $s=1-i\Delta r$ causes the two $\zeta$ factors on top to cancel with the first two on the bottom, and similarly the first two $\Gamma$ factors on the bottom cancel with the $\Gamma(\frac{s\pm i\Sigma r}{2})$ terms on top.  We are left with
\begin{eqnarray*}
B_0(1-i\Delta r) & = & \frac{1}{2}L_\psi(1-i\Delta r)	\frac{\Gamma(\frac{1}{2}-i\Delta r)\Gamma(\frac{1}{2})}
{i\pi \zeta(2-2i\Delta r)\Gamma(1-i\Delta r)}\\
& \sim & 3L_\psi(1)\frac{\pi}{i\pi \cdot \pi^2 }
\end{eqnarray*}
by letting $\Delta r\to 0$  (all remaining $\zeta$ and $\Gamma$ factors have bounded arguments) and substituting the values $\Gamma(\frac{1}{2}) = \sqrt{\pi}$ and $\zeta(2)=\pi^2/6$.  The Lemma now follows from the fact that
$$L_\psi(1) = \int_0^\infty \psi(y) \frac{dy}{y^2} = \int_{S^*M} F_\psi(x,y) \frac{dxdy}{y^2}$$
\end{proof}

\begin{Lemma}\label{|B_0|}
We have
$$B_0(1+i\Delta r) \sim \frac{\xi(1+2ir')\xi(1-2ir'')}{\xi(1+2ir'')\xi(1-2ir')}B_0(1-i\Delta r)$$
In particular, since $\left|\frac{\xi(s)}{\xi(\bar{s})}\right| = 1$, we have $|B_0(1+i\Delta r)| \sim |B_0(1-i\Delta r)|$.

Moreover, if $\Delta r  =o(\log\log{r_j} /\log{r_j})$, then 
$$B_0(1+i\Delta r) \sim e^{2i\Delta r\log{r_j}} B_0(1-i\Delta r)$$
\end{Lemma}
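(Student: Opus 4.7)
The strategy is to substitute $s = 1 \pm i\Delta r$ directly into the displayed formula for $B_0(s)$, form the ratio $B_0(1+i\Delta r)/B_0(1-i\Delta r)$, and identify on one hand the factors that reproduce the stated $\xi$-ratio and on the other hand the factors that tend to $1$ because $\Delta r \to 0$. At $s = 1 + i\Delta r$, the identities $s \pm i\Sigma r = 1 \pm 2ir'$ and $1 \mp 2ir''$ convert the numerator Gamma and zeta factors of $B_0(s)$ into $\Gamma(\frac{1}{2})\Gamma(\frac{1}{2}-ir'')\Gamma(\frac{1}{2}+ir')\Gamma(\frac{1}{2}+i\Delta r)$ and $\zeta(1-2ir'')\zeta(1+2ir')$ respectively; at $s = 1-i\Delta r$, the roles of $r'$ and $r''$ inside these factors are swapped and $\Gamma(\frac{1}{2}+i\Delta r)$ becomes $\Gamma(\frac{1}{2}-i\Delta r)$. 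Forming the ratio, the $s$-independent denominator factors $\Gamma(\frac{1}{2}-ir')\Gamma(\frac{1}{2}+ir'')\zeta(1-2ir')\zeta(1+2ir'')$ cancel exactly, and using $\xi(s) = \pi^{-s/2}\Gamma(s/2)\zeta(s)$ one identifies the surviving $r', r''$-dependent piece as $\pi^{2i\Delta r}\cdot \xi(1+2ir')\xi(1-2ir'')/[\xi(1+2ir'')\xi(1-2ir')]$; the $\pi^{2i\Delta r}$ exactly cancels the $\pi^{-2i\Delta r}$ coming from the $\pi^{s+i\Delta r}$ in the denominator of $B_0$. The residual factors $L_\psi(1\pm i\Delta r)$, $\Gamma(1 \pm i\Delta r)$, $\Gamma(\frac{1}{2} \pm i\Delta r)$, and $\zeta(2 \pm 2i\Delta r)$ are all continuous at $\Delta r = 0$ with nonzero limits, so their contribution is $1 + o(1)$, yielding the first asymptotic.

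For the modulus claim, I would combine the functional equation $\xi(s) = \xi(1-s)$ with the reflection $\overline{\xi(s)} = \xi(\bar s)$ to obtain $|\xi(1+2ir)| = |\xi(1-2ir)|$ for real $r$. Hence each of $|\xi(1+2ir')/\xi(1-2ir')|$ and $|\xi(1-2ir'')/\xi(1+2ir'')|$ equals $1$, the $\xi$-ratio is unimodular, and $|B_0(1+i\Delta r)| \sim |B_0(1-i\Delta r)|$.

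For the phase asymptotic under $\Delta r = o(\log\log r_j/\log r_j)$, I would refactor the $\xi$-ratio into its Gamma, $\pi$, and zeta pieces. The Gamma piece $\Gamma(\frac{1}{2}+ir')\Gamma(\frac{1}{2}-ir'')/[\Gamma(\frac{1}{2}+ir'')\Gamma(\frac{1}{2}-ir')]$, by applying Lemma~\ref{Gamma} to each of the two sub-ratios (using $\Gamma(\frac{1}{2}-ir) = \Gamma(\frac{1}{2}+i(-r))$ for the second), asymptotes to $e^{i\Delta r\log r_j}\cdot e^{i\Delta r\log r_j} = e^{2i\Delta r\log r_j}$, while $\pi^{-2i\Delta r}$ contributes only $1 + O(\Delta r)$. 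The main obstacle is the zeta ratio $\zeta(1+2ir')\zeta(1-2ir'')/[\zeta(1+2ir'')\zeta(1-2ir')]$: invoking $\overline{\zeta(s)} = \zeta(\bar s)$ reduces it to $\exp\{2i[\arg\zeta(1+2ir') - \arg\zeta(1+2ir'')]\}$, which one must show tends to $1$. This I would handle via the mean value theorem together with classical bounds on $\arg\zeta$ along the $1$-line (Titchmarsh), the threshold $o(\log\log r_j/\log r_j)$ being what is needed to absorb the logarithmic growth of $\zeta'/\zeta$.
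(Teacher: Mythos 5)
Your proposal is correct and follows essentially the same route as the paper: direct substitution of $s=1\pm i\Delta r$ into $B_0$, identification of the bounded-argument factors that tend to $1$, recognition of the surviving $\xi$-ratio (with the $\pi^{\pm 2i\Delta r}$ bookkeeping), unimodularity via $\overline{\xi(s)}=\xi(\bar s)$, and for the phase, Stirling for the $\Gamma$-ratios together with the Weyl--Hadamard--de la Vall\'ee Poussin bound on $\tfrac{d}{dr}\log\zeta(1+ir) \lesssim \log r/\log\log r$ (which is exactly what the threshold $o(\log\log r_j/\log r_j)$ is designed to absorb). The only cosmetic difference is that you split the $\xi$-ratio into its $\Gamma$, $\pi$, and $\zeta$ pieces before taking logarithms, while the paper writes the whole prefactor $e^{i\theta}$ at once and differentiates it; the mean-value-theorem argument you sketch is the same derivative estimate.
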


\begin{proof}
The first statement follows from substituting $s=1+i\Delta r$ and following the previous Lemma:  we take $\Delta r\to 0$ in all of the bounded arguments, and we are simply left with $r'$ and $r''$ interchanged in the unbounded $\Gamma$ and $\zeta$ factors.  The second statement follows from $|\xi(\bar{s})| = |\overline{\xi(s)}|=|\xi(s)|$.

For the last statement, we need to evaluate asymptotics of the phase prefactor 
$$e^{i\theta} = \frac{\zeta(1+2ir')\zeta(1-2ir'')\Gamma(\frac{1}{2}+ir')\Gamma(\frac{1}{2}-ir'')}{\zeta(1+2ir'')\zeta(1-2ir')\Gamma(\frac{1}{2}+ir'')\Gamma(\frac{1}{2}-ir')}$$
so that 
$$B_0(1+i\Delta r) \sim e^{i\theta}B_0(1-i\Delta r)$$
Taking logarithms, we see
\begin{eqnarray*}
i\theta & = & \Big( \log{\zeta(1+2ir')} - \log{\zeta(1+2ir'')}\Big) 
+ \Big(\log{\Gamma(\frac{1}{2}+ir')} - \log{\Gamma(\frac{1}{2}+ir'')}		\Big)\\
& & \quad + \Big( \log{\zeta(1-2ir'')} - \log{\zeta(1-2ir')}\Big) 
+ \Big(\log{\Gamma(\frac{1}{2}-ir'')} - \log{\Gamma(\frac{1}{2}-ir')}		\Big)
\end{eqnarray*}
Applying the Weyl-Hadamard-de la Vall\'ee Poussin bound \cite{Titchmarsh}
$$\frac{d}{dr}\log{\zeta(1+ir)} \lesssim \frac{\log{r}}{\log\log{r}}$$
and Stirling's formula
$$\frac{d}{dr} \log{\Gamma(\frac{1}{2}+ir)} = \log{r} + O(1)$$
we find 
$$i\theta = 2(O(\log{r_j}/\log\log{r_j})+ \log{r_j}) \cdot i\Delta r = 2i\Delta r\log{r_j} + o(1)$$
whenever $\Delta r = o(\log\log{r_j}/\log{r_j})$, and the final result follows.
\end{proof}

\vspace{.1in}

We can now complete the calculation of the residues at $s= 1 \pm i\Delta r$.  Writing the Laurent expansion
$$\zeta(s) = \frac{1}{s-1} + \gamma + \sum_{n=1}^\infty c_n (s-1)^n$$
of $\zeta$ near $s=1$, we have
\begin{eqnarray*}
\lefteqn{Res_{s=1+i\Delta r} + Res_{s=1-i\Delta r}}\\ 
& = & \zeta(1+2i\Delta r)B_0(1+i\Delta r) + \zeta(1-2i\Delta r)B_0(1-i\Delta r)\\
& = & \left(	\frac{1}{2i\Delta r} +O(\Delta r)	\right) \Big(B_0(1+i\Delta r)-B_0(1-i\Delta r)\Big)\\
& &  \quad\quad + \left(	\gamma +O(\Delta r)^2	\right) \Big(B_0(1+i\Delta r)+B_0(1-i\Delta r)\Big)\\
 & \sim & \left(\frac{e^{i\theta}-1}{2i\Delta r}  + (e^{i\theta}+1)\gamma		\right)\frac{3}{i\pi^2}\int_{S^*M} F_\psi\frac{dxdy}{y^2}\\
 & \propto &\frac{1}{2\pi i} \int_{S^*M} F_\psi\frac{dxdy}{y^2}
\end{eqnarray*}
where $\theta$ is such that
$$e^{i\theta} = \frac{\zeta(1+2ir')\zeta(1-2ir'')\Gamma(\frac{1}{2}+ir')\Gamma(\frac{1}{2}-ir'')}{\zeta(1+2ir'')\zeta(1-2ir')\Gamma(\frac{1}{2}+ir'')\Gamma(\frac{1}{2}-ir')}$$
as above, and in fact
$$\int_{S^*M} F_\psi d\mu_{r', r''} \sim \left(\frac{e^{i\theta}-1}{2i\Delta r}  + (e^{i\theta}+1) \gamma		\right)\frac{6}{\pi}\int_{S^*M} F_\psi\frac{dxdy}{y^2}$$

Moreover, if $\Delta r =o(\log\log{r_j} /\log{r_j})$, then we have $\theta = 2i\Delta r \log{r_j}+o(1)$, 
in which case the constant $\gamma$-term is negligible and we get the final formula
$$\int_{S^*M} F_\psi d\mu_{r', r''} \sim \frac{(e^{2i\log{r_j}\cdot \Delta r}-1)}{2 i\Delta r}\frac{6}{\pi} \int_{S^*M} F_\psi\frac{dxdy}{y^2}$$
appearing in Theorem~\ref{sharp asymptotic}.

\subsection{Weight $k\neq 0$ Incomplete Eisenstein Series --- Estimating the Error}\label{jakobson}

As in  \cite{Jak}, we finish the proofs of Theorems~\ref{equidist} and \ref{sharp asymptotic} by evaluating $\int_{S^*M} F_\psi d\mu_{r',r''}$  where
\begin{eqnarray}
F_\psi(z,\theta) & = & \frac{1}{2\pi i} e^{2ik\theta} \int_{Re(s)=\frac{3}{2}} L_\psi(s) E_{2k}(z,s) ds\label{incomplete eisen}\\
L_\psi(s) & = &\int_{y=0}^\infty \psi(y) y^{-s}\frac{dy}{y}\nonumber
\end{eqnarray}
is an incomplete Eisenstein series of weight $2k$.  As before, we  consider 
$$\int_{\Gamma\backslash\mathbb{H}} F_\psi(z,0)E(z, \frac{1}{2}-ir')E_{-2k}(z, \frac{1}{2}+ir'')\frac{dxdy}{y^2}$$
after integrating out $\theta$.  
We then substitute (\ref{incomplete eisen}) for $F_\psi$, and after ``unfolding" the $E_{2k}$'s in the expansion (\ref{incomplete eisen}) we get
\begin{eqnarray*}
\int F_\psi d\mu_{r', r''} & = & \frac{1}{2\pi i}\int_{Re(s)=\frac{3}{2}} \int_{y=0}^\infty \int_{x=0}^1 L_\psi(s) y^s E(z, 1/2-ir') E_{-2k}(z,1/2+ir'') \frac{dxdy}{y^2}
\end{eqnarray*}
After replacing $E(z,1/2-ir')$ and $E_{-2k}(z, 1/2+ir'')$ with their Fourier expansions (\ref{eisen 0 expansion}) and (\ref{eisen -2k expansion},\ref{eisen 2k expansion}), we 
 substitute \cite[9.235.2]{G-R}
$$W_{0,-ir'}(u) = \sqrt{\frac{u}{\pi}}K_{-ir'}(u/2)		$$
in the expansion of $E(z,\frac{1}{2}-ir')$, and after integrating out $x$ we get
\begin{eqnarray*}
\lefteqn{\int F_\psi d\mu_{r', r''}}\\ 
& = & \frac{1}{2\pi i } \int_{Re(s)=\frac{3}{2}}\int_{0}^\infty y^sL_\psi(s) \frac{dy}{y}ds\cdot \left( 1 + \phi(1/2-ir')\frac{(-1)^k\Gamma^2(\frac{1}{2}+ir'')\phi(1/2+ir'')}{\Gamma(\frac{1}{2}-k+ir'')\Gamma(\frac{1}{2}+k+ir'')}\right)\\ 
& & + \Big(\text{rapidly decreasing in } r_j\Big)\\
& & + \frac{(-1)^k\Gamma(\frac{1}{2}+ir'')}{4\xi(1-2ir')\xi(1+2ir'')}\frac{1}{2\pi i}\int_{Re(s)=\frac{3}{2}} \int_0^\infty y^sL_\psi(s) \sum_{n=1}^\infty \frac{\sigma_{2ir'}(n)\sigma_{-2ir''}(n)}{n^{1+i\Delta r}}\times\\
& & \times W_{0,-ir'}(4\pi ny)\left(	\frac{W_{k, ir''}(4\pi n y)}{\Gamma(\frac{1}{2}+k+ir'')} + \frac{W_{-k, ir''}(4\pi n y)}{\Gamma(\frac{1}{2}-k+ir'')}	\right) \frac{dy}{y^2}ds
\end{eqnarray*}
where the ``rapidly decreasing in $r_j$" term comes from integrating the highly oscillatory cross-terms 
$$\int_{Re(s)=2}\int_0^\infty y^{s-1\pm i(r'+r'')}L_\psi(s)dyds$$
as in section~\ref{luo sarnak}, and similarly by the Mellin inversion formula 
$$\psi(y) = \frac{1}{2\pi i } \int_{Re(s)=\frac{3}{2}} L_\psi(s) y^s ds$$
together with the fact that for any $r\in\mathbb{R}$ we have
\begin{eqnarray*}
\left| \phi(\frac{1}{2}+ir)\right|  & = & \left| \frac{\xi(2ir)}{\xi(1+2ir)}\right| = \left| \frac{\xi(1-2ir)}{\xi(1+2ir)}\right| =1 
\end{eqnarray*}
by the functional equation $\xi(s) = \xi(1-s)$ and $|\xi(\bar{s})| = |{\xi(s)}|$, and together with the fact that the factor
$$	\frac{(-1)^k\Gamma^2(\frac{1}{2}+ir'')}{\Gamma(\frac{1}{2}-k+ir'') \Gamma(\frac{1}{2}+k+ir'')}	$$
is bounded by Stirling's approximation $|\Gamma(\sigma+ir)| \sim \sqrt{2\pi} e^{-\pi|r|/2}|r|^{\sigma-\frac{1}{2}}$;
we see that the first term, giving the incoming and outgoing contributions
\begin{eqnarray*}
\left|	 \int_{Re(s) = \frac{3}{2}}\int_{0}^\infty y^sL_\psi(s) \frac{dy}{y}ds\cdot \Big( 1 + \frac{(-1)^k\Gamma^2(\frac{1}{2}+ir'')\phi(\frac{1}{2}-ir')\phi(\frac{1}{2}+ir'')}{\Gamma(\frac{1}{2}-k+ir'')\Gamma(\frac{1}{2}+k+ir'')}\Big)\right|	\\
\lesssim  \int_0^\infty \psi(y) \frac{dy}{y}
\end{eqnarray*}
is again uniformly bounded in $r_j$.

Therefore, after interchanging summation and integration and changing variables to $u=4\pi ny$, we get
\begin{eqnarray}
\lefteqn{\int F_\psi d\mu_{r', r''}  + O(1)}\nonumber\\
& = & \frac{(-1)^k\Gamma(\frac{1}{2}+ir'')}{4\xi(1-2ir')\xi(1+2ir'')}\frac{1}{2\pi i} \int_{Re(s)=\frac{3}{2}}(4\pi)^{1-s}L_\psi(s) \left(	\sum_{n=1}^\infty \frac{\sigma_{2ir'}(n)\sigma_{-2ir''}(n)}{n^{s+i\Delta r}}\right)\times\nonumber\\
& & \times \left(	\frac{\int_0^\infty W_{k,ir''}(u) W_{0,-ir'}(u) u^s\frac{du}{u^2}}{\Gamma(\frac{1}{2}+k+ir'')}  + \frac{ \int_0^\infty W_{-k,ir''}(u) W_{0,-ir'}(u) u^s\frac{du}{u^2}}{\Gamma(\frac{1}{2}-k+ir'')}  \right) ds\label{incomplete eisen integrals}
\end{eqnarray}

Once again the infinite sum in (\ref{incomplete eisen integrals}) can be evaluated, as was first done by Ramanujan \cite{Ramanujan_sum_formulas}
$$\sum_{n=1}^\infty \frac{\sigma_{2ir'}(n)\sigma_{-2ir''}(n)}{n^{s+i\Delta r}} = \frac{\zeta(s+i\Delta r) \zeta(s-i\Sigma r)\zeta(s+i\Sigma r)\zeta(s-i\Delta r)}{\zeta(2s)}$$
Denote the two integrals (over $u$) in (\ref{incomplete eisen integrals}) by $I^3_k$ and $I^4_k$; they can also be evaluated for $Re(s)=\frac{3}{2}<k+1$ (see \cite[7.611.7]{G-R})
\begin{eqnarray*}
I_k^3  (s) & = & \frac{\Gamma(s-i\Delta r)\Gamma(s+i\Sigma r)\Gamma(-2ir'')}{\Gamma(\frac{1}{2}-k-ir'')\Gamma(s+\frac{1}{2}+ir'')} \times\\ 
 &  & \times \  _3F_2 \left(s-i\Delta r, s+i\Sigma r , \frac{1}{2}-k+ir'' ; 1+2ir'' , s+ \frac{1}{2}+ir'' \right)\\
& & + \frac{\Gamma(s-i\Sigma r)\Gamma(s+i\Delta r)\Gamma(2ir'')}{\Gamma(\frac{1}{2}-k+ir'')\Gamma(s+\frac{1}{2}-ir'')}\times \\ 
 &  & \times \  _3F_2 \left(s-i\Sigma r, s+i\Delta r , \frac{1}{2}-k-ir'' ; 1-2ir'' , s+\frac{1}{2} - ir'' \right)\\
 I_k^4  (s) & = & \frac{\Gamma(s-i\Delta r)\Gamma(s-i\Sigma r)\Gamma(2ir')}{\Gamma(\frac{1}{2}+ir')\Gamma(s+\frac{1}{2}+k-ir')} \times\\ 
 &  & \times \  _3F_2 \left(s-i\Delta r, s-i\Sigma r , \frac{1}{2}-ir' ; 1-2ir' , s+\frac{1}{2}+k-ir' \right)\\
& & + \frac{\Gamma(s+i\Sigma r)\Gamma(s+i\Delta r)\Gamma(-2ir')}{\Gamma(\frac{1}{2}-ir')\Gamma(s+\frac{1}{2}+k+ir')} \times\\ 
 &  & \times \  _3F_2 \left(s+i\Sigma r, s+i\Delta r , \frac{1}{2}+ir' ; 1+2ir' , s+\frac{1}{2}+k +ir' \right)
\end{eqnarray*}

As in \cite{Jak}, we will transform these into more convenient form.  Starting with $I_3^k$, we use the formula \cite[\S 3.5, p.18]{Bailey} (see also \cite[(2.9)]{Jak})
$$\ _3F_2 (a,b,c; e,f) = \frac{\Gamma(e+f-a-b-c)\Gamma(f)}{\Gamma(f-a)\Gamma(e+f-b-c)}\ _3F_2 (a, e-b, e-c; e, e+f-b-c)$$
with $a=s\mp i\Delta r, b=s\pm i \Sigma r, c= \frac{1}{2}-k\pm ir'', e=1\pm 2ir'', f=s+\frac{1}{2}\pm ir''$ to get
\begin{eqnarray}\label{Ik3}
I_k^3  (s) & = & \frac{\Gamma(s-i\Delta r)\Gamma(s+i\Sigma r)\Gamma(-2ir'')\Gamma(k+1-s)}{\Gamma(\frac{1}{2}-k-ir'')\Gamma(\frac{1}{2}+ir')\Gamma(k+1-i\Delta r)} \times\\ 
 &  & \times \  _3F_2 \left(s-i\Delta r, 1-s-i\Delta r , \frac{1}{2}+k+ir'' ; 1+2ir'' , 1+k-i\Delta r \right)\nonumber\\
& & + \frac{\Gamma(s-i\Sigma r)\Gamma(s+i\Delta r)\Gamma(2ir'')\Gamma(k+1-s)}{\Gamma(\frac{1}{2}-k+ir'')\Gamma(\frac{1}{2}-ir')\Gamma(k+1+i\Delta r)}\times \nonumber\\ 
 &  & \times \  _3F_2 \left(s+i\Delta r, 1-s+i\Delta r , \frac{1}{2}+k-ir'' ; 1-2ir'' , 1+k +i\Delta r \right)\nonumber
\end{eqnarray}

We transform $I_k^4$ via the formula \cite[\S 3.5, p.18]{Bailey} (see also\footnote{There is a typo in the formula in \cite{Jak}, with the gamma factors appearing on the wrong side of the equation; but the correct formula is applied to (3.7b) there.} \cite[(3.7b)]{Jak})
\begin{eqnarray*}
\lefteqn{\frac{\Gamma(f-a)\Gamma(2+b+c-e-f)}{\Gamma(1-a)\Gamma(1+b+c-e)} 
\ _3F_2 (1+a+b+c-e-f, b, c; 1+b+c-e, 1+b+c-f)}\\
& = & \ _3F_2 (1+a+b+c-e-f, 1-f+b, 1-f+c; 2+b+c-e-f, 1+b+c-f)
\end{eqnarray*}
setting $a=s-k, b=s\mp i\Sigma r, c=\frac{1}{2} \mp ir', 1-e = k\pm i\Sigma r, 1-f=\frac{1}{2}-s\pm ir''$ to get
\begin{eqnarray}\label{Ik4}
I_k^4 (s) & = & \frac{\Gamma(s-i\Delta r)\Gamma(s-i\Sigma r)\Gamma(2ir')\Gamma(1-s+k)}{\Gamma(\frac{1}{2}+ir')\Gamma(\frac{1}{2}+k-ir'')\Gamma(1+k-i\Delta r)} \times\\ 
 &  & \times \  _3F_2 \left(s-i\Delta r, \frac{1}{2}-ir' , 1-s-i\Delta r ;  1+k-i\Delta r, 1-2ir' \right)\nonumber\\
& & + \frac{\Gamma(s+i\Sigma r)\Gamma(s+i\Delta r)\Gamma(-2ir')\Gamma(1-s+k)}{\Gamma(\frac{1}{2}-ir')\Gamma(\frac{1}{2}+k+ir'')\Gamma(1+k+i\Delta r)} \times\nonumber\\ 
 &  & \times \  _3F_2 \left(s+i\Delta r, \frac{1}{2}+ir' , 1-s+i\Delta r ; 1+2ir' , 1+k+i\Delta r \right)\nonumber
\end{eqnarray}

These transformations are applied in part so that each of the hypergeometric functions $_3F_2$ converge for arbitrary $s,r',r''$.  We shift the line of integration in (\ref{incomplete eisen integrals}) 
to $Re(s)=\frac{1}{2}$, where the integral is estimated as in \cite{Jak} to decay as $O_{k,\psi, \epsilon,\omega}(|r_j|^{-1/6+\epsilon})$, using Stirling's formula to estimate the gamma factors, estimates on the growth of our hypergeometric functions $_3F_2$ on the line $Re(s)=\frac{1}{2}$ (see \cite[Appendix]{Jak}), and Weyl's subconvexity bound $|\zeta(\frac{1}{2}+ir)|=O_\epsilon(|r|^{1/6+\epsilon})$.  Note that the extra $\Delta r$ terms sprinkled in do not affect the asymptotics of the $_3F_2$ terms or the $|\zeta|$ factors, or the absolute value $|\Gamma|$ factors by Lemma~\ref{Gamma}, so that the overall estimate is not affected.  The ``top" and ``bottom" integrals over $\frac{1}{2} \leq Re(s) \leq \frac{3}{2}$ for  large $|Im(s)|$ are similarly estimated as in \cite{Jak} to decay as well.  

In shifting the line of integration, we pass through $4$ (simple) poles coming from the $\zeta$ factors at argument $1$.  The poles at $s=1\pm i\Sigma r$ are readily seen to decay rapidly in $r_j$, since $L_\psi$ is rapidly decreasing (again estimating all relevant $|\Gamma|$ and $|\zeta|$ factors as in \cite{Jak}).  Thus it remains to examine the residues at $s=1\pm i\Delta r$, which will give the main term.  Write
\begin{eqnarray*}
B_k(s) & =  & \frac{(-1)^k(4\pi)^{1-s}\Gamma(\frac{1}{2}+ir'')}{4\xi(1-2ir')\xi(1+2ir'')}L_\psi(s) \frac{ \zeta(s-i\Sigma r)\zeta(s+i\Sigma r)}{\zeta(2s)} \times\nonumber\\
& & \times \left(	\frac{I_k^3(s)}{\Gamma(\frac{1}{2}+k+ir'')}  + \frac{ I_k^4(s)}{\Gamma(\frac{1}{2}-k+ir'')}  \right) 
\end{eqnarray*}
so that the integrand (in $s$) of (\ref{incomplete eisen integrals}) becomes
$$\zeta(s+i\Delta r) \zeta(s-i\Delta r) B_k(s)$$
where $B_k$ is holomorphic on a neighborhood of $s=1$, and it is the residue of this integrand at the poles $s=1\pm i\Delta r$ that again determine the asymptotics of $\int F_\psi d\mu_{r', r''}$.

We wish to compare the residues here with the corresponding ones in section~\ref{luo sarnak} for the weight $0$ component of the measure.  Comparing like terms, we see that
\begin{eqnarray*}
B_k(s) &= & \frac{(-1)^k4^{1-s}\pi}{2} B_0(s) \frac{\Gamma(\frac{1}{2}+ir'')\Gamma(s)}{\Gamma(\frac{s-i\Delta r}{2})
\Gamma(\frac{s-i\Sigma r}{2})\Gamma(\frac{s+i\Sigma r}{2})\Gamma(\frac{s+i\Delta r}{2})} \times\\
& & \times  \left(	\frac{I_k^3(s)}{\Gamma(\frac{1}{2}+k+ir'')}  + \frac{ I_k^4(s)}{\Gamma(\frac{1}{2}-k+ir'')}  \right)
\end{eqnarray*}
and so
\begin{eqnarray}
\frac{B_k(1-i\Delta r)}{B_0(1-i\Delta r)} & \sim & \frac{(-1)^k}{2}  \frac{1}{\Gamma(\frac{1}{2}-ir')}\left(	\frac{I_k^3(1-i\Delta r)}{\Gamma(\frac{1}{2}+k+ir'')}  + \frac{ I_k^4(1-i\Delta r)}{\Gamma(\frac{1}{2}-k+ir'')}  \right)\label{B_k B_0 -}\\
\frac{B_k(1+i\Delta r)}{B_0(1+i\Delta r)} & \sim & \frac{(-1)^k}{2}  \frac{\Gamma(\frac{1}{2}+ir'')}
{\Gamma(\frac{1}{2}-ir'')\Gamma(\frac{1}{2}+ir')}
   \left(	\frac{I_k^3(1+i\Delta r)}{\Gamma(\frac{1}{2}+k+ir'')}  + \frac{ I_k^4(1+i\Delta r)}{\Gamma(\frac{1}{2}-k+ir'')}  \right)\label{B_k B_0 +}
\end{eqnarray}

The analog of Lemma~\ref{|B_0|} for weight $k\neq 0$ is:
\begin{Lemma}\label{|B_k|}
We have
$$B_k(1+i\Delta r) \sim \frac{\xi(1+2ir')\xi(1-2ir'')}{\xi(1+2ir'')\xi(1-2ir')}B_k(1-i\Delta r)$$
In particular we have $|B_k(1+i\Delta r)| \sim |B_k(1-i\Delta r)|$,
and if $|\Delta r| \lesssim 1/\log{r_j}$, then 
$$B_k(1+i\Delta r) \sim e^{2i\Delta r\log{r_j}} B_k(1-i\Delta r)$$
\end{Lemma}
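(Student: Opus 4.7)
My plan is to reduce the present Lemma to Lemma~\ref{|B_0|} via a direct analysis of the weight-$k$ correction factor $B_k/B_0$ at $s = 1 \pm i\Delta r$. From (\ref{B_k B_0 -}) and (\ref{B_k B_0 +}) we have
\[
\frac{B_k(1+i\Delta r)/B_0(1+i\Delta r)}{B_k(1-i\Delta r)/B_0(1-i\Delta r)} \sim \frac{\Gamma(\frac{1}{2}+ir'')\Gamma(\frac{1}{2}-ir')}{\Gamma(\frac{1}{2}-ir'')\Gamma(\frac{1}{2}+ir')} \cdot \frac{\mathcal{I}(1+i\Delta r)}{\mathcal{I}(1-i\Delta r)},
\]
where $\mathcal{I}(s) := I_k^3(s)/\Gamma(\frac{1}{2}+k+ir'') + I_k^4(s)/\Gamma(\frac{1}{2}-k+ir'')$. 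Since Lemma~\ref{|B_0|} already supplies the desired $\xi$-prefactor for the $B_0$ ratio, all three assertions of the present Lemma follow once I show that this extra factor has modulus $1 + o_\omega(1)$ and, in the regime $|\Delta r| \lesssim 1/\log{r_j}$, phase $1 + o(1)$.

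The Gamma quotient on the right has modulus exactly $1$, since $|\Gamma(\bar z)| = |\Gamma(z)|$, and its phase is $e^{-2i\Delta r\log{r_j}} + o(1)$ in the regime $|\Delta r| \lesssim 1/\log{r_j}$ by the second assertion of Lemma~\ref{Gamma}. For the $\mathcal{I}$ ratio, I substitute $s = 1 \pm i\Delta r$ into (\ref{Ik3}) and (\ref{Ik4}) and exploit two simplifications that reduce the problem to a routine application of Lemma~\ref{Gamma}. First, every $_3F_2$ appearing in $I_k^3$ and $I_k^4$ has one argument of the form $\pm 2i\Delta r$ or $s \pm i\Delta r$ which vanishes at the relevant endpoint; hence each $_3F_2$ equals $1 + O(\Delta r)$ uniformly in $r_j$, and these contributions drop out of the ratio. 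Second, the bounded-in-$r_j$ Gamma factors---those depending only on $k$ and $\Delta r$---are continuous in $\Delta r$ near $0$ and agree up to $o(1)$ at the two values of $s$. The only $r_j$-sensitive content that remains is a collection of Gamma quotients of the form $\Gamma(\alpha \pm 2ir'')/\Gamma(\alpha \pm 2ir')$ or $\Gamma(\beta + ir'')/\Gamma(\beta + ir')$, arising because $s \pm i\Sigma r$ evaluates to $\{1 + 2ir'', 1 - 2ir'\}$ at $s = 1 - i\Delta r$ but to $\{1 + 2ir', 1 - 2ir''\}$ at $s = 1 + i\Delta r$. By Lemma~\ref{Gamma} each such quotient has modulus $1 + o_\omega(1)$, and its phase accumulated across all terms of $\mathcal{I}$ produces $e^{+2i\Delta r\log{r_j}} + o(1)$ in the small-$\Delta r$ regime, exactly cancelling the $e^{-2i\Delta r\log{r_j}}$ contributed by the Gamma prefactor.

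The main obstacle is the bookkeeping: each of $I_k^3$ and $I_k^4$ is a sum of two summands whose dependence on $r'$ and $r''$ is asymmetric, so the effective $r' \leftrightarrow r''$ swap induced by $s = 1 - i\Delta r \leftrightarrow s = 1 + i\Delta r$ does not act uniformly across summands, and one must verify that each summand's ratio has the same leading phase in order to conclude the claim for the sum. This is checked via a direct Stirling computation as in the proof of Lemma~\ref{|B_0|}'s phase identity, using $(d/dr)\log{\Gamma(\alpha + ir)} = \log{r} + O(1)$ to extract each unbounded Gamma quotient's leading phase. With the first assertion established, the second follows from $|\xi(\bar s)| = |\xi(s)|$, and the third from the phase analysis of Lemma~\ref{|B_0|}'s third assertion applied to the $\xi$-prefactor.
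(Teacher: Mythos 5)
Your proposal is correct and follows essentially the same strategy as the paper: reduce to Lemma~\ref{|B_0|} via the ratio $B_k/B_0$ from (\ref{B_k B_0 -})--(\ref{B_k B_0 +}), observe that the $s = 1-i\Delta r \leftrightarrow 1+i\Delta r$ swap interchanges $r' \leftrightarrow r''$ in the unbounded $\Gamma(s \pm i\Sigma r)$ factors while the $_3F_2$ and bounded-argument Gamma factors contribute $1+o(1)$, and cancel the resulting phase against the Gamma prefactor via Stirling. The paper packages the conclusion as $\tfrac{\Gamma(\frac12+ir'')}{\Gamma(\frac12-ir'')\Gamma(\frac12+ir')}\cdot\tfrac{\Gamma(1+2ir')}{\Gamma(1+2ir'')}\sim\tfrac{1}{\Gamma(\frac12-ir')}$ rather than splitting into modulus and phase, but this is only a presentational difference.
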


{\em Proof:}  Stirling's formula implies that as long as $\Delta r \to 0$ we have 
$$ \frac{I_k^3(1+i\Delta r)}{I_k^3(1-i\Delta r)} \sim \frac{\Gamma(1+2ir')}{\Gamma(1+2ir'')} \sim \frac{I_k^4(1+i\Delta r)}{I_k^4(1-i\Delta r)}$$
since the only terms in (\ref{Ik3},\ref{Ik4}) contributing non-trivially to the asymptotic ratio between the values at $s=1\pm i\Delta r$  are the terms of the form $\Gamma(s\pm i\Sigma r)$--- all other terms are either independent of $s$ or have bounded arguments, so that the limit $\Delta r\to 0$ gives the same asymptotic for $s=1+i\Delta r$ and $s=1-i\Delta r$; but for these terms we have
$$\frac{\Gamma(1+i\Delta r +i\Sigma r)}{\Gamma(1-i\Delta r +i\Sigma r)}=\frac{\Gamma(1+2ir')}{\Gamma(1+2ir'')}  \sim \frac{\Gamma(1-2ir'')}{\Gamma(1-2ir')}=\frac{\Gamma(1+i\Delta r -i\Sigma r)}{\Gamma(1-i\Delta r -i\Sigma r)}$$
by Stirling's approximation.  Moreover,
$$  \frac{\Gamma(\frac{1}{2}+ir'')}{\Gamma(\frac{1}{2}-ir'')\Gamma(\frac{1}{2}+ir')}
\frac{\Gamma(1+2ir')}{\Gamma(1+2ir'')}
\sim  \frac{1}{\Gamma(\frac{1}{2}-ir')} $$
so that the asymptotics (\ref{B_k B_0 -},\ref{B_k B_0 +}) give 
$$  \frac{B_k(1+i\Delta r)}{B_0(1+i\Delta r)}\sim \frac{B_k(1-i\Delta r)}{B_0(1-i\Delta r)}  $$
and we are reduced to Lemma~\ref{|B_0|}.  $\Box$

To complete the proof of Theorem~\ref{equidist} it now remains to show that for every $k\neq0$
$$  B_k(1-i\Delta r) = o(B_0(1-i\Delta r))    $$
which will follow from
\begin{equation}\label{weight k weight 0}
\frac{1}{\Gamma(\frac{1}{2}-ir')} \left(    \frac{I_k^3(1-i\Delta r)}{\Gamma(\frac{1}{2}+k+ir'')}+\frac{I_k^4(1-i\Delta r)}{\Gamma(\frac{1}{2}-k+ir'')}    \right) = o(1)
\end{equation}

To prove (\ref{weight k weight 0}), we first recall from \cite{Jak} that all of the hypergeometric series factors in (\ref{Ik3}) and (\ref{Ik4}) converge uniformly in $r', r''$ near $s=1$, and so we may take the limit $\Delta r\to 0$ term-by-term.  Recall that the hypergeometric series $_3F_2$ is defined by
$$ \ _3 F_2(a,b,c; e,f) = 1 + \sum_{n=1}^\infty \frac{(a)_n(b)_n(c)_n}{(e)_n(f)_n n!}$$
where $(z)_n = z(z+1)(z+2)\ldots (z+n-1)$.  Since the term $1-s\pm i\Delta r$ appears in the numerator of each $\ _3F_2$, and we have $(1-s\pm i\Delta r)_n\to 0$ for $s=1\pm i\Delta r$ as $\Delta r\to 0$, we find that each $_3F_2$ in (\ref{Ik3}) and (\ref{Ik4}) converges to $1$ as $\Delta r\to 0$.  Hence, we omit these terms in the asymptotic calculation.  Similarly, the terms $\Gamma(s\pm i\Delta r)\sim 1$, and $\Gamma(k+1-s)\sim \frac{1}{k}\Gamma(k+1 \pm i\Delta r)$ when $s=1\pm i\Delta r$, as $\Delta r\to 0$.  Thus, we get
\begin{eqnarray}
\frac{I_k^3(1-i\Delta r)}{\Gamma(\frac{1}{2}-ir')\Gamma(\frac{1}{2}+k+ir'')} 
& \sim & \frac{\Gamma(1+2ir'')\Gamma(-2ir'')}{k\Gamma(\frac{1}{2}-k-ir'')|\Gamma(\frac{1}{2}+ir')|^2\Gamma(\frac{1}{2}+k+ir'')}\label{Ik3_1}\\
& &  + \frac{\Gamma(1-2ir')\Gamma(2ir'')}{k\Gamma(\frac{1}{2}-k+ir'')\Gamma(\frac{1}{2}-ir')^2\Gamma(\frac{1}{2}+k+ir'')}\label{Ik3_2}\\
\frac{I_k^4(1-i\Delta r)}{\Gamma(\frac{1}{2}-ir')\Gamma(\frac{1}{2}-k+ir'')} 
& \sim & \frac{\Gamma(1-2ir')\Gamma(2ir')}{k|\Gamma(\frac{1}{2}+ir')|^2\Gamma(\frac{1}{2}+k-ir'')\Gamma(\frac{1}{2}-k+ir'')}\label{Ik4_1}\\
& & +\frac{\Gamma(1+2ir'')\Gamma(-2ir')}{k\Gamma(\frac{1}{2}-ir')^2\Gamma(\frac{1}{2}+k+ir'')\Gamma(\frac{1}{2}-k+ir'')}\label{Ik4_2}
\end{eqnarray}

A quick estimate using Stirling's formula shows that each term above is bounded, and that their absolute values are asymptotic to each other, but this is not sufficient for us--- we wish to show that there is cancellation in their sum.  Taking first (\ref{Ik3_2}) and (\ref{Ik4_2}), we use the identity $\Gamma(1+s) = s\Gamma(s)$ and 
collect like terms to arrive at
\begin{eqnarray*}
 \lefteqn{\frac{\Gamma(1-2ir')\Gamma(2ir'')}{k\Gamma(\frac{1}{2}-k+ir'')\Gamma(\frac{1}{2}-ir')^2\Gamma(\frac{1}{2}+k+ir'')}
 + \frac{\Gamma(1+2ir'')\Gamma(-2ir')}{k\Gamma(\frac{1}{2}-ir')^2\Gamma(\frac{1}{2}+k+ir'')\Gamma(\frac{1}{2}-k+ir'')}}\\
 & = & \frac{-2ir'\Gamma(-2ir')\Gamma(2ir'') + 2ir''\Gamma(2ir'') \Gamma(-2ir')}
 {k\Gamma(\frac{1}{2}-ir')^2\Gamma(\frac{1}{2}+k+ir'')\Gamma(\frac{1}{2}-k+ir'')}\\
 & = & \frac{\Gamma(-2ir')\Gamma(2ir'')}{k\Gamma(\frac{1}{2}-ir')^2\Gamma(\frac{1}{2}+k+ir'')\Gamma(\frac{1}{2}-k+ir'')}
\cdot (-2i\Delta r) \\
& \lesssim &  \frac{1}{k|r|} \cdot (-2i\Delta r) \to 0
 \end{eqnarray*}
 using Stirling's approximation 
$$\log{\Gamma(z)} = (z-\frac{1}{2})\log{z}-z+ O(1)$$
in the third line.

For the two remaining summands (\ref{Ik3_1}) and (\ref{Ik4_1}), we first observe that
$$  \Gamma(s)\Gamma(1-s) = -s\Gamma(s)\Gamma(-s) = -\Gamma(1+s)\Gamma(-s)$$
which, upon iterating $k$ times, gives
\begin{eqnarray*}
\Gamma(\frac{1}{2}-k-ir'')\Gamma(\frac{1}{2}+k+ir'') & = & (-1)^k \Gamma(\frac{1}{2}-ir'')\Gamma(\frac{1}{2}+ir'')\\
\Gamma(\frac{1}{2}+k-ir'')\Gamma(\frac{1}{2}-k+ir'') & = & (-1)^k \Gamma(\frac{1}{2}-ir'')\Gamma(\frac{1}{2}+ir'')
\end{eqnarray*}
so that
$$\frac{\Gamma(1+2ir'')\Gamma(-2ir'')}{k\Gamma(\frac{1}{2}-k-ir'')|\Gamma(\frac{1}{2}+ir')|^2\Gamma(\frac{1}{2}+k+ir'')} + \frac{\Gamma(1-2ir')\Gamma(2ir')}{k|\Gamma(\frac{1}{2}+ir')|^2\Gamma(\frac{1}{2}+k-ir'')\Gamma(\frac{1}{2}-k+ir'')}$$
\begin{equation}\label{absolute value terms} 
= \frac{(-1)^k \big(2ir''|\Gamma(2ir'')|^2 - 2ir' |\Gamma(2ir')|^2\big)}{k|\Gamma(\frac{1}{2}+ir')|^2|\Gamma(\frac{1}{2}+ir'')|^2}
\end{equation}
But Stirling's approximation  shows that
$$2r''|\Gamma(2ir'')|^2 \sim 2\pi e^{-2\pi|r''|} \sim 2\pi e^{-2\pi|r'|} \sim 2r'|\Gamma(2ir')|^2$$
as $\Delta r\to 0$, and since the denominator 
$$	k|\Gamma(\frac{1}{2}+ir')|^2|\Gamma(\frac{1}{2}+ir'')|^2 \sim 4\pi^2 ke^{-\pi(|r'|+|r''|)} $$
we find that (\ref{absolute value terms}) is asymptotic to
$$\frac{(-1)^ki}{2\pi k} \left(e^{\pi\Delta r} - e^{-\pi\Delta r}\right) \sim \frac{(-1)^ki}{k}  \Delta r \to 0$$
and hence putting the two halves back together, the expression (\ref{weight k weight 0}) tends to $0$ as required.		

Moreover, plugging this into (\ref{B_k B_0 -}) and using Lemma~\ref{|B_k|} we have shown that
$$B_k(1\pm i\Delta r) \sim \frac{1}{2k} i\Delta r \cdot B_0(1\pm i\Delta r)$$
Plugging this into the calculation of residues at the end of section~\ref{luo sarnak}, we conclude that for $F_\psi$ an incomplete Eisenstein series of weight $2k$, we have the asymptotic
$$\int F_\psi d\mu_{r', r''} \sim \frac{3}{2\pi k}(e^{i\theta}-1)\int_{S^*M} F_\psi \frac{dxdy}{y^2}$$
where once again for $\Delta r\lesssim 1/\log{r_j}$ we can evaluate
$$i\theta  = 2i\Delta r \log{r_j} + o(1)$$ 
so that
$$\int F_\psi d\mu_{r', r''} \sim \frac{1}{2k}\Big(e^{2i\log{r_j}\cdot (r'-r'')}-1\Big) \frac{3}{\pi}\int_{S^*M} F_\psi \frac{dxdy}{y^2}$$
and thus when integrating over $r'$ and $r''$   we are left with
\begin{equation}\label{K asymptotic}
\mu_{h_j}(F_\psi) \sim \frac{3}{\pi}\int_{S^*M} F_\psi\frac{dxdy}{y^2} \cdot \frac{1}{2k} \iint h_j(r')\overline{h_j}(r'') \Big[	e^{2i\log{r_j}\cdot (r'-r'')}-1	\Big]dr'dr''
\end{equation}
$\Box$

\vspace{.2in}
{\em Completion of the proof of Theorems~\ref{equidist} and \ref{sharp asymptotic}:}  We have shown that integrals of $d\mu_{r',r''}$ against holomorphic cusp forms, shifted Maass cusp forms, and weight $k\neq 0$ incomplete Eisenstein series all contribute lower order terms to the main asymptotic computed in section~\ref{luo sarnak}, coming from the integral against incomplete Eisenstein series of weight $0$.  Since these span $L^2(S^*M)$, standard approximation arguments (as in \cite{LuoSarnak} and \cite{Jak}) then show that this asymptotic holds for any smooth test function $f\in C^\infty(S^*M)$.

It remains to transfer the asymptotics of $\mu_{r',r''}$ to those of $\mu_{h_j}$, obtained by integrating
$$\mu_{h_j} = \iint h_j(r')\overline{h_j}(r'') \mu_{r',r''} dr'dr''$$
Observe that if $h_j$ satisfies the quasimode condition (\ref{quasimode condition}) with $\omega(r_j) = o(1)$, we can find another sequence $\tilde{\omega}(r_j)\searrow 0$, and a sequence of smooth functions $\tilde{h}_j$ each supported in $[r_j - \tilde{\omega}(r_j), r_j+\tilde{\omega}(r_j)]$ , satisfying $\int_r \tilde{h}_j(r)dr=1$, such that
\begin{equation}\label{h h tilde}
||h_j - \tilde{h}_j||_1 = o(||h_j||_1)=o(1) 
\end{equation}
as $j\to\infty$.  To see this, note that for every fixed $\epsilon$, the quasimode condition implies that
$$\lim_{r_j\to\infty} \int_{|r-r_j|\geq \epsilon}|h_j(r)| dr = o(||h_j||_1)$$
Hence we can find a decreasing sequence of $\{\tilde{\omega}(r_j)\}$ such that 
$$\lim_{r_j\to\infty} \int_{|r-r_j|\geq \tilde{\omega}(r_j)}|h_j(r)| dr = o(||h_j||_1)$$
and we can find  appropriate smooth cutoffs $\tilde{h}_j$ approximating $h_j$ that are supported in $[r_j - \tilde{\omega}(r_j), r_j+\tilde{\omega}(r_j)]$.

We use the simple inequality
$$\mu_{r', r''}(f) \lesssim_f \max\{ \log{r'}, \log{r''}\}$$
which follows from the Cauchy-Schwarz inequality; to apply this we select a smooth, compactly supported, positive function $g$ such that $g\equiv 1$ on the support of $f$, so that we may write $f = g \cdot f$.  Thus \cite{Z1} 
$$Op(f) \sim Op(g) \circ Op(f)\sim Op(g)^* \circ Op(f)$$
and therefore\footnote{The discerning reader will note that the asymptotic $Op(f)\sim Op(g)^*\circ Op(f)$ is valid as operators on $L^2(M)$, while the Eisenstein series are not in $L^2$.  However, the compact support of $f$ (and $g$) mean that the Eisenstein series in the inner product may be replaced with smooth finite truncations away from the cusp, which are then in $L^2$, so that the asymptotic is valid.}
\begin{eqnarray*}
\lefteqn{\left\langle Op(f) E(\cdot, \frac{1}{2}+ir'), E(\cdot, \frac{1}{2}+ir'')\right\rangle}\\
 & \sim & \left\langle Op(f) E(\cdot, \frac{1}{2}+ir'), Op(g)E(\cdot, \frac{1}{2}+ir'')\right\rangle\\
& \lesssim & \mu_{r'}(|f|^2)^{1/2}\mu_{r''}(|g|^2)^{1/2} \\
& \lesssim_{f,g} & (\log{r'})^{1/2}(\log{r''})^{1/2}
\end{eqnarray*}

Since  $\int h_j(r)dr=1$, we then have
\begin{eqnarray*}
\int_{r''} h_j(r'')(\log{r''})^{1/2} dr'' 
& \leq & 	\int_{r''\leq 2r_j} h_j(r'')(\log{r_j})^{1/2}dr'' + \int_{r''> 2r_j} h_j(r'')|r''-r_j|dr''	\\
& \leq &  (\log{r_j})^{1/2} + o(1)\\
& \lesssim &  (\log{r_j})^{1/2}
\end{eqnarray*}
and similarly for $\tilde{h}_j$.
Moreover, since $\tilde{h}_j$ is supported near $r_j$, and $||h_j-\tilde{h}_j||_1=o(1)$, we also have
\begin{eqnarray*}
\lefteqn{\int_{r''} [h_j(r'')-\tilde{h}_j(r'')] (\log{r''})^{1/2} dr''}\\ 
& \lesssim & 	\int_{r''\leq 2r_j} [h_j(r'')-\tilde{h}_j(r'')](\log{r_j})^{1/2}dr'' + \int_{r''> 2r_j} h_j(r'')|r''-r_j|dr''	\\
& = &  o(\sqrt{\log{r_j}}) + o(1)\\
& = & o(\sqrt{\log{r_j}})
\end{eqnarray*}
Putting these together gives
\begin{eqnarray*}
\lefteqn{\mu_{h_j}(f) - \mu_{\tilde{h}_j} (f)}\\
& = & \iint h_j(r')\overline{h_j}(r'')\mu_{r',r''}(f)dr'dr'' - \iint \tilde{h}_j(r')\overline{\tilde{h}_j}(r'')\mu_{r',r''}(f)dr'dr''\\
& = & \iint h_j(r')\overline{[h_j(r'')-\tilde{h}_j(r'')]}\mu_{r',r''}(f)dr'dr'' + \iint [h_j(r')-\tilde{h}_j(r')]\overline{\tilde{h}_j}(r'')\mu_{r', r''}(f)dr'dr''\\
& \lesssim & \int_{r'} h_j(r') (\log{r'})^{1/2}\cdot o(\sqrt{\log{r_j}})dr' + \int_{r'} [h_j(r')-\tilde{h}_j(r')] (\log{r'})^{1/2}(\log{r_j})^{1/2}	dr'\\
& = & o(\log{r_j})
\end{eqnarray*}
Thus $\frac{1}{2\log{r_j}}\mu_{h_j}$ and $\frac{1}{2\log{r_j}}\mu_{\tilde{h}_j}$ have the same weak-* limit points, and completes the proof of Theorem~\ref{equidist}.

The above observations apply equally well to the case where $\int_r h(r)|r-r_j|dr =o(\log\log{r_j}/\log{r_j})$--- that is, we can similarly find a sequence $\{\tilde{h}_j\}$ with each $\tilde{h}_j$ supported in a window of size $\tilde{\omega}(r_j)=o(\log\log{r_j}/\log{r_j})$, such that 
$||h_j-\tilde{h}_j||_1 = o(1)$, and thus the microlocal lifts $\mu_{h_j}$ and $\mu_{\tilde{h}_j}$ are asymptotic.  Moreover,   the convolutions 
$$ \iint h_j(r')\overline{h_j}(r'') 
\frac{e^{2i\log{r_j}\cdot(r'-r'')}-1}{2i\log{r_j}\cdot(r'-r'')} 
dr'dr'' 
\sim  \iint \tilde{h}_j(r')\overline{\tilde{h}_j}(r'') 
\frac{e^{2i\log{r_j}\cdot(r'-r'')}-1}{2i\log{r_j}\cdot(r'-r'')} 
dr'dr''$$
by the same argument.
Thus the asymptotic expression in the second part of Theorem~\ref{sharp asymptotic}--- which clearly holds for $\tilde{h}_j$, since it is supported in a window of width $o(\log\log{r_j}/\log{r_j})$, and therefore we can simply plug in the expression from the first part of the Theorem--- holds for $\mu_{h_j}$ as well.  Then, since 
$$\lim_{r\to\infty} \frac{e^{2i\log{r_j}\cdot(r'-r'')}-1}{2i\log{r_j}\cdot (r'-r'')]} = 1$$
when $|r'-r''| = o(1/\log{r_j})$, the final statement follows and completes the proof of Theorem~\ref{sharp asymptotic}.  $\Box$

\section{QUE Thresholds}\label{threshold}

In this section, we interpret the results of Theorems~\ref{equidist} and \ref{sharp asymptotic}, in the context of Conjecture~\ref{hypconj} and the following result for compact surfaces:
\begin{Theorem}[\cite{locquasi}]\label{locquasi}
Let $M=\Gamma\backslash\mathbb{H}$ be a compact hyperbolic surface, and $\gamma\subset S^*M$ a closed geodesic.  Then for any $\epsilon>0$, there exists $\delta>0$ and a sequence of $\left(\frac{\epsilon}{\log{r}}\right)$-quasimodes on $M$ whose microlocal lifts do not equidistribute, and in fact concentrate mass $\geq \delta$ on the geodesic $\gamma$.
\end{Theorem}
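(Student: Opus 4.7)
The strategy is to construct quasimodes by time-averaging a Gaussian beam (Lagrangian state) microlocalized on $\gamma$. Fix a point $\rho_0 \in\gamma$ and let $\psi_r^{(0)}$ be a semiclassical coherent state of spectral parameter $r$, microlocalized at $\rho_0$ with momentum pointing along $\gamma$, with the transverse phase chosen to be a complex quadratic form adapted to the stable/unstable directions of the Poincar\'e map (a standard Gaussian beam on $\gamma$). I would then define
$$\psi_r := \int_{\mathbb{R}} h_T(t)\, e^{it\sqrt{\Delta+1/4}}\,\psi_r^{(0)}\, dt,\qquad h_T(t) = T^{-1}\chi(t/T),$$
where $\chi$ is a fixed non-negative even Schwartz bump and $T = C/\epsilon \cdot \log r$ for an absolute constant $C$. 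The spectral localization of $\psi_r$ is then a direct computation: $h_T$ acts spectrally as convolution by $\widehat{h_T}$, a bump of width $\sim 1/T = (\epsilon/C)/\log r$ around $r$. An integration-by-parts argument shows $\|(\Delta+1/4+r^2)\psi_r\|_2 \lesssim (\epsilon/\log r)\cdot r \cdot \|\psi_r\|_2$, establishing the quasimode inequality (\ref{regular quasimode condition}) with $\omega(r)=\epsilon/\log r$.

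The microlocal content is then analyzed via propagation. For $|t|\leq T_E \sim \tfrac{1}{2}\log r$ one invokes Egorov's theorem to see that $e^{it\sqrt{\Delta+1/4}}\psi_r^{(0)}$ remains microlocalized in an $r^{-1/2}$-tube about $\Phi_t(\rho_0)\in\gamma$, with Gaussian profile transverse to $\gamma$ whose covariance evolves by the linearized geodesic flow. For $|t|$ beyond $T_E$, the full Egorov bound fails, but the Gaussian beam ansatz in Fermi coordinates along $\gamma$ gives explicit control: the transverse covariance evolves exactly by the symplectic Poincar\'e map, and the imaginary part of the phase keeps the beam exponentially concentrated on $\gamma$ itself, even after many periods. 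Thus $e^{it\sqrt{\Delta+1/4}}\psi_r^{(0)}$ remains supported, in the sense of Gaussian concentration, in a fixed tubular neighborhood of $\gamma$ for all $|t|\leq T$.

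Given this, the mass computation proceeds as follows. For a test symbol $a$ supported in a fixed tube around $\gamma$ with $a \equiv 1$ on a smaller neighborhood, one expands
$$\langle Op(a)\psi_r,\psi_r\rangle = \iint h_T(t_1)\,\overline{h_T(t_2)}\,\langle Op(a)\, U_{t_1}\psi_r^{(0)}, U_{t_2}\psi_r^{(0)}\rangle\,dt_1\, dt_2,$$
and estimates each inner product using the Gaussian beam formalism. The diagonal contribution $t_1\approx t_2$ gives the main term, bounded below by a constant times $\|\psi_r\|_2^2$, because each $U_t\psi_r^{(0)}$ has most of its $L^2$ mass on $\gamma$. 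Off-diagonal contributions are handled by the nonstationary-phase argument coming from the oscillation of $\widehat{h_T}$. Comparing with the denominator $\|\psi_r\|_2^2$ computed the same way, one obtains $\langle Op(a)\psi_r,\psi_r\rangle / \|\psi_r\|_2^2 \geq \delta(\epsilon)>0$, so any weak-$*$ limit of $\mu_{\psi_r}$ charges $\gamma$ with mass at least $\delta$.

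The main obstacle is precisely the propagation regime $T_E \lesssim |t|\leq T$, which is non-empty once $\epsilon<1$ (say) and is where Egorov's theorem breaks down. Handling this requires the exact Gaussian beam calculus along $\gamma$ rather than generic Egorov-type estimates; in particular, one needs uniform-in-$r$ control of the transverse beam width under iterated Poincar\'e return maps, and a careful accounting of the resulting cumulative transverse spread. It is through this step that the constant $\delta=\delta(\epsilon)$ inevitably depends on $\epsilon$, tending to $0$ as $\epsilon\to 0$, reflecting the exponential instability along $\gamma$.
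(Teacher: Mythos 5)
Your high-level structure---time-averaging a microlocalized state over a window $T\sim(C/\epsilon)\log r$ to sharpen the spectral localization, and then arguing that the average still charges $\gamma$---is indeed the framework used in \cite{locquasi}, and you have correctly located the crux in the propagation regime $T_E\lesssim|t|\leq T$. However, the mechanism you invoke there is not correct. The transverse data of a Gaussian beam along $\gamma$ evolves via the Riccati equation $\dot M = 1-M^2$ in curvature $-1$, whose fixed points $M=\pm1$ are \emph{real}; any admissible complex $M$ (with $\operatorname{Im}M>0$, which is forced by $L^2$-normalizability) converges under the flow to a real fixed point, with $\operatorname{Im}M(t)\sim e^{-2|t|}$, so the transverse width $(r\operatorname{Im}M)^{-1/2}\sim r^{-1/2}e^{|t|}$ becomes macroscopic precisely at $|t|\approx T_E$. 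There is no choice of quadratic phase for which ``the imaginary part keeps the beam exponentially concentrated on $\gamma$ even after many periods''; the uniform-in-$r$ control of the beam width that your last paragraph calls for simply does not hold, and for $|t|\gtrsim T_E$ the propagated state has largely left any fixed tube around $\gamma$. This undermines the diagonal lower bound in your mass computation, since those times account for the bulk of the range $|t|\leq T$ once $\epsilon$ is small.

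The construction in \cite{locquasi} resolves this differently, and the difference is essential. Rather than a single coherent state with a complex quadratic transverse phase, one starts from a spherical $1$-quasimode at a point of $\gamma$ and \emph{averages it along a segment of the stable horocycle} against a fixed smooth bump. This extra averaging is what produces constructive interference for returns along $\gamma$ and, crucially, \emph{destructive} interference for the pieces that have spread off of $\gamma$ under propagation beyond $T_E$, so that when those pieces revisit a neighborhood of $\gamma$ they do not recombine coherently. The mass retained near $\gamma$ (with $\delta(\epsilon)\to0$ as $\epsilon\to0$) comes from this interference pattern, not from any uniform confinement of a Gaussian beam; your proposal is missing this mechanism, and without it the concentration claim past the Ehrenfest time is not justified.
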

The main idea in the construction is to take a spherical $1$-quasimode centered at a point on $\gamma$, and then average this over a piece of stable horocycle by convolving with a fixed, smooth, compactly supported test function along the stable direction.  This averaging over a stable horocycle introduces constructive interferences along the $\gamma$ direction, and destructive interferences away from $\gamma$.  One then applies the wave equation up to time $T=c\log{r}$, and averages over propagation times; so that on the spectral side we improve the quality of the quasimodes to $O(1/\log{r})$, but on the other hand the constructive interferences along $\gamma$ preserve positive measure near the geodesic, while the destructive interferences prevent the waves from coming back to $\gamma$ to interfere.

A natural question in light of Theorem~\ref{equidist} is to understand why the situation is so different for Eisenstein series.  First and foremost, the construction of \cite{locquasi} fails for Eisenstein series because they furnish a meager part of the spectrum, as first observed by Selberg (see eg. \cite[11.1]{Iwaniec}).   One can assume that the convolution of a spherical $1$-quasimode along a piece of stable non-periodic horocycle is primarily supported on the discrete spectrum, and therefore offers no information on the asymptotics of functions supported in the Eisenstein spectrum.  The exception to this is if $\gamma$ is taken to be a cusp-bound geodesic--- in this case one can replace convolution with a test function, with a complete average over the periodic horocycle.  Since the discrete spectrum is orthogonal to all functions invariant along periodic horocycles, this process yields a quasimode supported in the continuous spectrum.  However, mass localizing on cusp-bound geodesics escapes quickly from compact sets and thus localizes its mass in the cusp, and therefore does not contribute to the asymptotics of the $\mu_{h_j}(f)$.  

There is also a good reason for the equidistribution of $o(1)$-quasimodes, due to the intrinsic arithmetic structure of the Eisenstein series.  Recall that any Eisenstein series $E(z,\frac{1}{2}+ir)$ is also an eigenfunction of all Hecke operators $T_p$, with eigenvalue $1+p^{-2ir}$ depending {\em continuously} on the Laplace spectral parameter $r$.  This means that any Eisenstein $o(1)$-quasimode is automatically also an $o(1)$-quasimode for $T_p$.  This is a highly unusual situation, and surely does not hold for cusp forms.

In joint work with E. Lindenstrauss \cite{jointQmodes}, we showed that on a compact congruence surface any quantum limit arising from joint $o(1)$-quasimodes for $\Delta$ and one $T_p$ (outside a finite set of bad primes depending on the surface), must be the uniform Liouville measure.  The argument is based on the measure rigidity results of \cite{Lin}; and in particular, can only show that the limit measure is proportional to Liouville measure--- in the case of a compact surface, any limit measure is automatically a probability measure, so this does not impact the result.  Here, however, it is interesting to note that the arithmetic structure responsible for Theroem~\ref{equidist} cannot control the scaling of the measure, and in fact one can have $\mu=0$ for sequences of sufficiently weak quasimodes.  

In this context, it is informative to look at the $o(1/\log{r})$ threshold of Conjecture~\ref{hypconj}, where in light of Theorem~\ref{locquasi} we would have expected to see scarring on a geodesic for $\left(\frac{\epsilon}{\log{r}}\right)$-quasimodes.  Though Theorem~\ref{equidist} shows that this cannot happen on a closed geodesic, we can still identify extra concentration near cusp-bound geodesics from the lower order asymptotics of (\ref{K asymptotic}).  Note that for $o(1/\log{r})$ quasimodes, this asymptotic of the $K$-Fourier coefficients vanishes; while for weaker $\left(\epsilon/\log{r}\right)$-quasimodes, we get $K$-Fourier coefficients $\gtrsim 1/k$, with the implied proportionality constant growing linearly in $\epsilon$.  It is in this sense that we observe extra localization near cusp-bound geodesics, which are invisible in the main asymptotic, but show up as lower order terms, and may be considered responsible for the diminished mass of the quantum limits and the extra concentration in the cusp.

To further develop this idea, we note that the Fourier transform of the convolution kernel in (\ref{K asymptotic})
$$c_{r_j}(s) := e^{-2is\log{r_j}}-1$$
is given by $\widehat{c_{r_j}}(t) = \delta_{-2\log{r_j}} - \delta_0$.  Thus these $K$-Fourier coefficients of $\mu_{h_j}$ are proportional to the asymptotic
\begin{eqnarray*}
 \iint h_j(r')\overline{h_j}(r'') \overline{c_{r_j}}(r'-r'')dr'dr''
& \sim & \langle h_j, [h_j\ast c_{r_j}]\rangle\\
& \sim & \left\langle \widehat{h_j},  \widehat{h_j\ast c_{r_j}}\right\rangle \\
& \sim &  |\widehat{h_j}(-2\log{r_j})|^2- |\widehat{h_j}(0)|^2\\
& \sim &   |\widehat{h_j}(-2\log{r_j})|^2 -1 
\end{eqnarray*}
Since $\widehat{h_j}(t)$ gives propagation times for plane waves, we interpret this as saying that the Eisenstein plane waves equidistribute up to time\footnote{Here the sign of $r$ is chosen positive if the Eisenstein plane waves are pointing ``out" towards the cusp, and thus equidistribution in compact sets comes from backward time evolution to $-2\log{r_j}$.  The $\delta_0$ term comes from the outgoing wave at time $0$.} $2\log{r_j} =  2T_E$, and then immediately afterwards localize extra mass near cusp-bound geodesics.  These contribute lower-order asymptotics at time $ 2T_E$, after which they escape to the cusp and concentrate extra mass there, decreasing the mass left behind in compact sets due to destructive interference away from the cusp-bound geodesics, in analogy with the arguments of \cite{locquasi}.  It is also worth noting that the standard semiclassical analysis only shows that wave propagation agrees with the geodesic flow up to the Ehrenfest time $T_E$, and here we see agreement (i.e., that they both equidistribute) to twice this time.  An observation of Sarnak \cite{SarnakLetterRudnick}, allowing one to halve the multiplicity bound for the modular surface, is likely related, and perhaps one could use this to show semiclassical agreement up to $2T_E$ in this case.

As a final remark, we note that the dearth of continuous spectrum on congruence surfaces is expected to be a very special feature of the arithmetic structure, and that generically the discrete spectrum should be small and the continuous spectrum dominant \cite{PhillipsSarnak}.  We expect therefore that generically, the continuous spectrum would follow the behavior observed in \cite{locquasi} for compact surfaces, and not have equidistribution of weak $o(1)$-quasimodes as in Theorem~\ref{equidist}.
Since we have virtually no tools to analyze spectral data on generic surfaces, it would be difficult to prove any statements; on the other hand, it might be interesting to look at numerics for, say, non-arithmetic Hecke triangle groups, and see if concentration on closed geodesics can be observed for Eisenstein $O(1/\log{r})$-quasimodes.

\subsection*{Acknowledgment}
The author is indebted to Alex Kontorovich for patiently explaining the ideas behind the work of Luo-Sarnak \cite{LuoSarnak} and Jakobson \cite{Jak}.  We would also like to thank Elon Lindenstrauss  for helpful discussions and suggestions.

\def\cprime{$'$}

\end{document}